\documentclass{amsart}
\pdfoutput=1

\usepackage{microtype}
\usepackage[mathscr]{eucal}
\usepackage{mathtools}
\usepackage{amsthm}
\usepackage{enumitem}
\usepackage{hyperref}
\usepackage{mleftright}

\DeclareMathOperator{\vol}{vol}
\DeclareMathOperator{\ave}{ave}
\DeclareMathOperator{\area}{area}
\DeclareMathOperator{\scal}{Sc}
\DeclareMathOperator{\two}{II}
\DeclareMathOperator{\tr}{tr}
\DeclareMathOperator{\conv}{conv}
\DeclareMathOperator{\gr}{Gr}
\DeclareMathOperator{\End}{End}

\theoremstyle{plain}
\newtheorem{theorem}{Theorem}[section]

\newtheorem{lemma}[theorem]{Lemma}
\newtheorem{corollary}[theorem]{Corollary}
\theoremstyle{remark}
\newtheorem{remark}[theorem]{Remark}

\theoremstyle{definition}
\newtheorem{example}[theorem]{Example}

\begin{document}
	
\title[Normal curvature bounds]{Normal curvature bounds\\ for immersions into Riemannian domains}

\author{Matteo Raffaelli}
\address{School of Mathematics, Georgia Institute of Technology, Atlanta, Georgia 30332}
\email{raffaelli@math.gatech.edu}
\date{July 14, 2026}
\subjclass[2020]{Primary 53C21, 53C40; Secondary 53A07, 53C20, 53C24, 53C42}
\keywords{Cartan--Hadamard manifold, closed submanifolds, convex functions, Gauss equation, geodesic ball, minimal submanifolds, scalar curvature, $n$-trace convexity}

\begin{abstract} 
We study Gromov's problem on the minimal normal curvature of immersions. Our main result is a lower bound for the average normal curvature of a closed submanifold immersed in a Riemannian domain. The bound is expressed in terms of an invariant measuring the optimal $n$-trace convexity of the domain under a unit-gradient normalization. As applications, we recover and extend Petrunin's lower bound for closed submanifolds immersed in Euclidean balls to geodesic balls in Cartan--Hadamard manifolds and, more generally, to Riemannian domains satisfying suitable convexity conditions. In the Cartan--Hadamard setting, under a natural assumption on the average scalar curvature, we show that equality forces the submanifold to lie minimally in the boundary sphere and that the radial sectional curvature vanishes along it. We also obtain sharper estimates for immersions into hyperbolic balls and Euclidean tubes.
\end{abstract}
\maketitle

\section{Introduction}

In \cite{petrunin2024}, Petrunin proved that the normal curvature of any closed flat submanifold $L^{n\geq 1}$ immersed in the closed unit ball $\mathbb B^{m}\subset \mathbb R^{m}$ is bounded below by $(3n/(n+2))^{1/2}$; here by \textit{normal curvature} we mean the maximum of $\lvert \two(u,u)\rvert$ over unit vectors $u$ in $T L$, where $\two$ denotes the second fundamental form~\cite[p.~3]{gromov2025}.

Petrunin’s argument relies on a new version of the Gauss equation. Gromov observed that this formula extends naturally to arbitrary Riemannian ambient spaces, but the Euclidean lower bound itself requires additional structure~\cite[sec.~14]{gromov2025}. We identify such structure as a quantitative form of $n$-trace convexity of the ambient domain. 
This yields, among other things, new Petrunin-type estimates for submanifolds immersed in Cartan--Hadamard balls, hyperbolic balls, and tubes.

To state our first result, let $\iota\colon L^n \hookrightarrow(M^{m}, g)$ be a closed (i.e., compact without boundary) Riemannian submanifold, let $\Omega\subset M$ be a subset such that $\iota(L)\subset \Omega$, and denote by $\kappa(p)$ the average of $\lvert \two(u,u)\rvert^2$ over unit vectors $u$ in $T_pL$. For any continuous function $\varphi$ on $L$, let $\ave(\varphi)$ denote its average:
\begin{equation*}
\ave(\varphi)= \frac{1}{\vol(L)}\int_L \varphi\, dV.
\end{equation*}

\begin{theorem}\label{thm:main}
Suppose there exists a smooth function $f$ on an open neighborhood of $\Omega$ in $M$ such that, on $\Omega$,
\begin{equation*}
\nabla^2 f \geq \lambda g,\quad \lvert \nabla f \rvert \leq \mu
\end{equation*} 
for some positive constants $\lambda,\mu$. Then
\begin{equation*}
\ave(\kappa) \geq \frac{3n}{n+2} \mleft( \frac{\lambda}{\mu}\mright)^2 + \frac{2}{n(n+2)}(\ave(\scal_n(TL) - \scal_L)) ,
\end{equation*}
where $\scal_n$ denotes the ambient $n$-scalar curvature and $\scal_L$ the scalar curvature of $L$.
\end{theorem}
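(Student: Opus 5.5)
The plan has two parts. First, express $\kappa$ pointwise through the mean curvature vector and intrinsic/extrinsic scalar curvatures (Petrunin's form of the Gauss equation). Second, bound $\ave\lvert H\rvert^2$ from below with a maximum-principle/divergence argument applied to $f\circ\iota$.

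\textbf{Step 1: pointwise identity for $\kappa$.} Fix $p\in L$ and an orthonormal basis $e_1,\dots,e_n$ of $T_pL$. Write $\two_{ij}=\two(e_i,e_j)$ and let $H=\sum_i \two_{ii}$ be the (unnormalized) mean curvature vector. Averaging the quartic polynomial $u\mapsto\lvert\two(u,u)\rvert^2$ over the unit sphere $S^{n-1}$ uses only the standard moments $\ave(u_i^4)=3/(n(n+2))$ and $\ave(u_i^2u_j^2)=1/(n(n+2))$ for $i\neq j$, with odd moments vanishing. This gives
\begin{equation*}
\kappa=\frac{1}{n(n+2)}\bigl(2\lvert\two\rvert^2+\lvert H\rvert^2\bigr).
\end{equation*}
Next, take the Gauss equation, summed over ordered pairs $i\neq j$ with the convention $\scal=\sum_{i\neq j}\sec(e_i,e_j)$:
\begin{equation*}
\scal_L=\scal_n(T_pL)+\lvert H\rvert^2-\lvert\two\rvert^2 .
\end{equation*}
This follows from $\sum_{i\neq j}\bigl(\langle \two_{ii},\two_{jj}\rangle-\lvert\two_{ij}\rvert^2\bigr)=\lvert H\rvert^2-\lvert\two\rvert^2$. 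Substituting for $\lvert\two\rvert^2$ yields
\begin{equation*}
\kappa=\frac{3}{n(n+2)}\lvert H\rvert^2+\frac{2}{n(n+2)}\bigl(\scal_n(TL)-\scal_L\bigr).
\end{equation*}
After averaging over $L$, the theorem reduces to the inequality $\ave\lvert H\rvert^2\ge n^2\lambda^2/\mu^2$.

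\textbf{Step 2: the convexity estimate.} Put $h=f\circ\iota$, which is smooth on the closed manifold $L$. The standard formula for the Laplacian of a restricted function is
\begin{equation*}
\Delta_L h=\tr_{TL}\nabla^2 f+\langle H,\nabla f\rangle .
\end{equation*}
On $\iota(L)\subset\Omega$ the hypotheses give $\tr_{TL}\nabla^2 f\ge n\lambda$, and Cauchy--Schwarz gives $\langle H,\nabla f\rangle\ge-\mu\lvert H\rvert$. Hence $\Delta_L h\ge n\lambda-\mu\lvert H\rvert$.

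Integrate over $L$ and apply the divergence theorem, which is valid because $L$ has no boundary. This gives $0\ge n\lambda\vol(L)-\mu\int_L\lvert H\rvert$, i.e.\ $\ave\lvert H\rvert\ge n\lambda/\mu$. Jensen's inequality (or Cauchy--Schwarz) then gives $\ave\lvert H\rvert^2\ge(\ave\lvert H\rvert)^2\ge n^2\lambda^2/\mu^2$. Plugging this into Step 1 produces exactly
\begin{equation*}
\ave(\kappa)\ge\frac{3n}{n+2}\Bigl(\frac{\lambda}{\mu}\Bigr)^2+\frac{2}{n(n+2)}\ave\bigl(\scal_n(TL)-\scal_L\bigr).
\end{equation*}

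\textbf{Main obstacle.} There is no deep obstacle. The point needing most care is the bookkeeping in Step 1: the spherical moment constants, and matching the normalization of $\scal_n$ and $\scal_L$ (sum over ordered pairs) so that the Gauss equation has coefficient exactly $1$. With a different normalization the coefficient $2/(n(n+2))$ would change.

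A secondary point is that $\iota$ is only an immersion, so $f$ must be pulled back rather than restricted. The formula for $\Delta_L(f\circ\iota)$ holds verbatim for immersions, and the hypotheses on $\nabla^2 f$ and $\nabla f$ are used only at the image points, which lie in $\Omega$.
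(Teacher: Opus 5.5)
Your proposal is correct and follows the paper's argument: you use the averaged Petrunin--Gauss formula (Lemma~\ref{lm:gromov}), integrate the Laplacian formula \eqref{eq:tr} over the closed $L$, bound the trace term by $n\lambda$ and the mean curvature term by $-\mu\lvert H\rvert$, and then apply Cauchy--Schwarz to $\ave(\lvert H\rvert)^2\le\ave(\lvert H\rvert^2)$. The only difference is that the paper normalizes to $\mu=1$ and routes the estimate through $\Theta_n(\Omega)\ge A_n(f,\Omega)\ge n\lambda$ in Theorem~\ref{thm:main2}, whereas you run the same computation directly with $\lambda,\mu$.
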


\begin{remark}
If $\ave(\kappa)\geq a\geq 0$, then the normal curvature is bounded below by $\sqrt a$.
\end{remark}

Let $d$ denote the ambient Riemannian distance. Note that when $\Omega$ is the closed Euclidean ball of radius $r$ centered at $p\in\mathbb R^{m}$, the function $f(q) = d(p,q)^2/2$ satisfies the assumptions of the theorem with $\lambda = 1$ and $\mu =r$. By the Hessian comparison theorem, the same conclusion holds for any closed geodesic ball $B_r^m$ in a Cartan--Hadamard manifold; recall that a Cartan--Hadamard manifold is a complete simply connected Riemannian manifold with nonpositive sectional curvature.
Thus Theorem~\ref{thm:main} yields the following extension of \cite[Thm.~1.2(b)]{petrunin2024}.

\begin{corollary}\label{cor:1}
If $\Omega = B_r^m$ and $\ave(\scal_L)\leq \ave(\scal_n(TL))$, then
\begin{equation}\label{eq:cor}
\ave(\kappa) \geq \frac{3n}{r^2(n+2)}.
\end{equation}
\end{corollary}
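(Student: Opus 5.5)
The plan is to obtain Corollary~\ref{cor:1} directly from Theorem~\ref{thm:main}. Fix the center $p$ of the geodesic ball $B_r^m$ and take
\begin{equation*}
f(q)=\tfrac12 d(p,q)^2 .
\end{equation*}
Since $M$ is Cartan--Hadamard, $\exp_p$ is a diffeomorphism. Hence $f$ is smooth on all of $M$, which gives an open neighborhood of $\Omega=B_r^m$. Once we check that $\lambda=1$ and $\mu=r$ are admissible, Theorem~\ref{thm:main} gives
\begin{equation*}
\ave(\kappa)\geq \frac{3n}{n+2}\cdot\frac{1}{r^2}+\frac{2}{n(n+2)}\ave\bigl(\scal_n(TL)-\scal_L\bigr).
\end{equation*}
The hypothesis $\ave(\scal_L)\leq\ave(\scal_n(TL))$ makes the second term nonnegative, and linearity of $\ave$ then yields \eqref{eq:cor}.

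The gradient bound is immediate. Away from $p$ we have $\nabla f=d_p\nabla d_p$ with $\lvert\nabla d_p\rvert=1$, and $\nabla f(p)=0$. So $\lvert\nabla f\rvert=d(p,\cdot)\leq r$ on $B_r^m$, and $\mu=r$ works.

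The only substantive step is the Hessian bound $\nabla^2 f\geq g$, which we need on the whole ball (in fact it holds everywhere). Away from $p$, write
\begin{equation*}
\nabla^2 f = d\,d\otimes d + d\,\nabla^2 d.
\end{equation*}
In the radial direction this equals $1$. On vectors orthogonal to $\nabla d$, the Hessian comparison theorem with the model curvature $0$ gives $\nabla^2 d\geq \frac1d\,g$, because sectional curvature is $\leq 0$ and there are no conjugate points. The mixed terms vanish since $\nabla^2 d(\nabla d,\cdot)=0$. Combining these, $\nabla^2 f\geq g$ on $M\setminus\{p\}$. At $p$ itself, $\nabla^2 f(p)=g_p$ holds in normal coordinates, and continuity also gives it. Thus $\lambda=1$ works.

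Alternatively, the Hessian bound can be seen via Jacobi fields. Along a unit-speed geodesic $\gamma$ from $p$, the comparison $\langle J',J\rangle\geq \lvert J\rvert^2/t$ for normal Jacobi fields with $J(0)=0$ follows from the Riccati inequality for the shape operator of distance spheres when $K\leq 0$. I expect this Hessian comparison step to be the main (though standard) obstacle. Everything else is bookkeeping.
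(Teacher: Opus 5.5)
Your proposal is correct and follows the paper's proof essentially verbatim. Both take the test function $f=\tfrac12 d(p,\cdot)^2$, use the Hessian comparison theorem to get $\nabla^2 f\geq g$, note $\lvert\nabla f\rvert=d(p,\cdot)\leq r$, and then apply Theorem~\ref{thm:main} with $\lambda=1$, $\mu=r$, dropping the nonnegative scalar-curvature term.
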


In the Euclidean case, $\scal_n =0$, so the hypothesis becomes simply $\ave(\scal_L)\leq 0$, which already yields the standard $3n/(n+2)$ bound when $r=1$. In \cite[p.~4]{petrunin2024b}, Petrunin notes that immersions of $\mathbb T^n$ into $\mathbb B^{m}$ with flat induced metric and $\kappa = 3n/(n+2)$ are spherical. We extend this observation as follows.

\begin{corollary}\label{cor:2}
Suppose $\Omega = B_r^m$ and $\ave(\scal_L) \leq \ave(\scal_n(TL))$. If equality holds in \eqref{eq:cor}, then $L$ is a minimal submanifold of $\partial B_r^m$, and the ambient sectional curvature vanishes in every plane spanned by $\nabla\rho$ and a vector tangent to $L$. 
\end{corollary}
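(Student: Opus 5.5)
We need the equality cases in the proof of Theorem~\ref{thm:main}, so we first recall how that proof should go. Take $f=\rho^2/2$, where $\rho=d(p,\cdot)$ and $p$ is the center of $B_r^m$. Along $L$ we have
\begin{equation*}
\Delta_L (f\circ\iota) = \tr_{TL}\nabla^2 f + \langle \nabla f, H\rangle ,
\end{equation*}
where $H$ is the mean curvature vector. Integrating over the closed manifold $L$ gives
\begin{equation*}
\int_L \tr_{TL}\nabla^2 f = -\int_L \langle \nabla f, H\rangle .
\end{equation*}
Hessian comparison gives $\tr_{TL}\nabla^2 f\ge n$, and $\lvert\nabla f\rvert=\rho\le r$. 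Cauchy--Schwarz then yields
\begin{equation*}
n\,\vol(L)\le r\int_L \lvert H\rvert \le r\,\vol(L)^{1/2}\Bigl(\int_L\lvert H\rvert^2\Bigr)^{1/2},
\end{equation*}
so that $\ave(\lvert H\rvert^2)\ge n^2/r^2$.

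Petrunin's Gauss-type identity averages $\lvert\two(u,u)\rvert^2$ over the unit sphere. It gives
\begin{equation*}
n(n+2)\kappa = 2\lvert\two\rvert^2+\lvert H\rvert^2 = 3\lvert H\rvert^2 + 2(\scal_n(TL)-\scal_L),
\end{equation*}
using $\lvert\two\rvert^2=\lvert H\rvert^2-\scal_L+\scal_n(TL)$. Averaging this identity gives \eqref{eq:cor} under the hypothesis $\ave(\scal_L)\le\ave(\scal_n(TL))$.

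Now suppose equality holds in \eqref{eq:cor}. Then every inequality in the chain above is an equality. First, $\ave(\scal_L)=\ave(\scal_n(TL))$ and $\ave\lvert H\rvert^2=n^2/r^2$. Second, equality in Cauchy--Schwarz forces $\lvert H\rvert$ to be constant, equal to $n/r$. Third, $\rho\lvert H\rvert=r\lvert H\rvert$ pointwise, and since $H\neq0$ this gives $\rho\equiv r$, i.e.\ $\iota(L)\subset\partial B_r^m$. Fourth, $\langle\nabla f,H\rangle=-\lvert\nabla f\rvert\lvert H\rvert$ holds pointwise (by continuity), so $H=-(n/r)\nabla\rho$. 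Finally, $\tr_{TL}\nabla^2f=n$ holds pointwise.

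Since $L$ lies in $\partial B_r^m$, whose unit normal is $\nabla\rho$, we can decompose the mean curvature as $H=H^{\partial}+\langle H,\nabla\rho\rangle\nabla\rho$. Here $H^{\partial}$ is the mean curvature of $L$ in the sphere $\partial B_r^m$, and the normal component equals $-\tr_{TL}\nabla^2\rho$ (the shape operator of the sphere restricted to $TL$). Because $H$ is parallel to $\nabla\rho$, we get $H^{\partial}=0$, so $L$ is minimal in $\partial B_r^m$.

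For the curvature statement, use $\nabla^2 f=d\rho\otimes d\rho+\rho\nabla^2\rho$. On $TL\perp\nabla\rho$ this gives $\tr_{TL}\nabla^2 f=r\tr_{TL}\nabla^2\rho=n$. Hessian comparison in nonpositive curvature says $\rho\nabla^2\rho\ge g-d\rho\otimes d\rho$. Together these force $r\nabla^2\rho(e_i,e_i)=1$ for each vector $e_i$ of an orthonormal frame of $TL$, and hence $r\nabla^2\rho|_{TL}=g|_{TL}$, because a positive semidefinite form with zero trace vanishes.

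\textbf{Main obstacle.} The remaining task is to turn this rigidity of the Hessian into vanishing radial curvature, and this is the step I expect to be hardest. I would use the Riccati/Jacobi comparison along the radial geodesic $\gamma$ from $p$ to $q\in L$. Choose $v\in T_qL$ and let $J$ be the Jacobi field with $J(0)=0$ and $J(r)=v$. By the index form, $\nabla^2\rho(v,v)=I(J,J)$. Now compare $I(J,J)$ with the index of the linear field $W=(t/r)P_t v$, where $P_t$ is parallel transport:
\begin{equation*}
I(W,W)=\frac{\lvert v\rvert^2}{r}-\int_0^r \frac{t^2}{r^2}\,\sec\bigl(\dot\gamma, P_t v\bigr)\lvert v\rvert^2\,dt\ \ge\ \frac{\lvert v\rvert^2}{r}.
\end{equation*}
The minimizing property of Jacobi fields gives $I(J,J)\le I(W,W)$, and in nonpositive curvature $I(J,J)\ge\lvert v\rvert^2/r$; we also have $\nabla^2\rho(v,v)=\lvert v\rvert^2/r$. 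So equality must hold throughout. The equality case of the Jacobi-field minimization gives $J=W$, and then the integral term vanishes. Since the integrand is nonnegative, $\sec(\dot\gamma,P_tv)=0$ for all $t$, in particular at $t=r$. This is exactly the vanishing of the sectional curvature of the plane spanned by $\nabla\rho$ and $v$, as claimed.
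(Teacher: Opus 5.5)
Everything through the Hessian rigidity $r\nabla^2\rho|_{TL}=g|_{TL}$ is correct. Your minimality argument, which uses only that $H$ is parallel to $\nabla\rho$ while $H^{\partial}\perp\nabla\rho$, is even a little leaner than the paper's.

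The gap is in the final step, where you expected difficulty. Your three facts are $\nabla^2\rho(v,v)=I(J,J)=\lvert v\rvert^2/r$, $I(J,J)\le I(W,W)$, and $I(W,W)\ge \lvert v\rvert^2/r$. The last two are both lower bounds for $I(W,W)$, so they are consistent with $I(W,W)>I(J,J)$, and nothing squeezes. Indeed, given $I(J,J)=\lvert v\rvert^2/r$,
\[
I(W,W)-I(J,J) = -\int_0^r \frac{t^2}{r^2}\,\langle R(P_tv,\dot\gamma)\dot\gamma,P_tv\rangle\,dt .
\]
So claiming equality in the index lemma is the same as claiming the curvature vanishing you want to prove; the step is circular. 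Comparing $J$ with a test field only yields upper bounds for $\nabla^2\rho$. That is the mechanism of Hessian comparison under a lower curvature bound, whereas your rigidity is equality in a lower bound.

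The fix is to decompose $I(J,J)$ itself instead of comparing it with $I(W,W)$:
\[
\frac{\lvert v\rvert^2}{r}=I(J,J)=\int_0^r\lvert J'\rvert^2\,dt-\int_0^r\langle R(J,\dot\gamma)\dot\gamma,J\rangle\,dt .
\]
Identify vectors along $\gamma$ with $\mathbb R^m$ via a parallel orthonormal frame, so that $v=J(r)-J(0)=\int_0^r J'\,dt$. Then Cauchy--Schwarz gives $\int_0^r\lvert J'\rvert^2\,dt\ge\lvert v\rvert^2/r$. The second integral is nonnegative because $M$ has nonpositive sectional curvature. Hence the continuous, one-signed integrand $\langle R(J,\dot\gamma)\dot\gamma,J\rangle$ vanishes identically, and at $t=r$ this says $\langle R(v,\nabla\rho)\nabla\rho,v\rangle=0$. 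Equality in Cauchy--Schwarz also forces $J=W$, so the radial curvature in fact vanishes along the whole segment, which is slightly more than the paper records.

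With this repair your route is a valid alternative to the paper's. The paper writes $S_t=t^{-1}I_t+Z_t$ with $Z_t\ge0$ and notes that $Z_rv=0$ makes $z(t)=\langle Z_tV,V\rangle$ minimal at $t=r$. It then reads off $0\ge z_-'(r)=-\langle R(v,\nabla\rho)\nabla\rho,v\rangle\ge0$ from the Riccati equation.
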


We emphasize that the Cartan--Hadamard assumption in Corollary~\ref{cor:1} is only needed to obtain the global estimate on arbitrary geodesic balls with the sharp constants $\lambda =1$ and $\mu =r$. Locally, no curvature sign assumption is required: in any Riemannian manifold, the squared distance function is strictly convex on sufficiently small geodesic balls centered at $p$. Hence every such ball satisfies the hypotheses of Theorem~\ref{thm:main}.

We will obtain Theorem~\ref{thm:main} as a consequence of a more general result in which the auxiliary convex function is replaced by a quantity depending only on the ambient domain. Let $\gr_n(TM)$ be the Grassmannian of $n$-planes tangent to $M$, and let $\Omega\subset M$ be a nonempty subset. Given a smooth function $f$ on an open neighborhood of $\Omega$ in $M$, define
\begin{equation*}
A_n(f,\Omega)= \inf_{\substack{q\in\Omega\\P\in \gr_n(T_q M)}}\tr_P (\nabla^2 f),
\end{equation*}
where 
\begin{equation*}
\tr_P(\nabla^2f)= \sum_{i=1}^n\nabla^2f(e_i,e_i)
\end{equation*}
for any orthonormal basis $(e_1,\dotsc, e_n)$ of $P$. Thus $A_n(f,\Omega)$ is the best uniform lower bound for the Hessian trace of $f$ over all $n$-planes in $TM\rvert_\Omega$, i.e., the sum of the $n$ smallest Hessian eigenvalues. Then set
\begin{equation*}
\Theta_n(\Omega)= \sup_{\sup_\Omega \lvert \nabla f\rvert \leq 1}  A_n(f,\Omega),
\end{equation*}
where the supremum is taken over all smooth functions $f$ defined on open neighborhoods of $\Omega$ in $M$ and satisfying $\sup \lvert \nabla f\rvert \leq 1$ on $\Omega$. This definition is inspired by the Harvey--Lawson theory of $p$-convexity~\cite{harvey2012, harvey2013}; see also \cite{sha1986, wu1987} and section~\ref{sec:theta}. In their terminology, functions whose Hessian has nonnegative trace on every $n$-plane are $n$-plurisubharmonic, and domains admitting $n$-plurisubharmonic exhaustions are $n$-convex. 
Informally, $\Theta_n(\Omega)$ measures the strongest $n$-trace convexity that $\Omega$ admits under the unit-gradient normalization. 

\begin{theorem}[The main result]\label{thm:main2}
\begin{equation}\label{eq:main2}
\ave(\kappa) \geq \frac{3}{n(n+2)} \Theta_n(\Omega)^2 + \frac{2}{n(n+2)}(\ave(\scal_n(TL) - \scal_L)).
\end{equation}
\end{theorem}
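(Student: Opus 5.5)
Fix any smooth $f$ near $\Omega$ with $\sup_\Omega|\nabla f|\le 1$; it suffices to prove
\begin{equation*}
\ave(\kappa)\ge \frac{3}{n(n+2)}A^2+\frac{2}{n(n+2)}\ave(\scal_n(TL)-\scal_L),\qquad A=A_n(f,\Omega),
\end{equation*}
and then take the supremum over $f$. When $A\le 0$ we only need $\ave(\kappa)\ge \frac{2}{n(n+2)}\ave(\scal_n-\scal_L)$, which comes out of the same pointwise identity below, so we may assume $A>0$.

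\emph{Step 1: integrate by parts.} Restrict $f$ to $L$ and use the submanifold Hessian formula
\[
\Delta_L(f\circ\iota)=\tr_{TL}\nabla^2 f+\langle \nabla f, H\rangle,
\]
where $H=\tr\two$ is the (unnormalized) mean curvature vector. Since $L$ is closed, $\int_L\Delta_L f=0$. The first term is at least $A$ pointwise, and the second is at least $-|H|$ because $|\nabla f|\le 1$. This gives
\[
A\le \ave(|H|), \qquad\text{hence}\qquad A^2\le \ave(|H|)^2\le \ave(|H|^2)
\]
by Cauchy--Schwarz.

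\emph{Step 2: the pointwise algebraic identity (Petrunin's Gauss formula).} Average $|\two(u,u)|^2$ over the unit sphere of $T_pL$. The standard fourth-moment identity for Gaussian or spherical averages gives
\[
\kappa=\frac{1}{n(n+2)}\Bigl(|H|^2+2|\two|^2\Bigr).
\]
The Gauss equation, traced over $T_pL$, reads
\[
\scal_L=\scal_n(T_pL)+|H|^2-|\two|^2,
\]
with the normalization of $\scal_n$ chosen so that this holds, i.e.\ the sum of ambient sectional curvatures over ordered pairs in an orthonormal frame of $T_pL$. Substituting $|\two|^2=|H|^2+\scal_n-\scal_L$ yields
\[
\kappa=\frac{1}{n(n+2)}\Bigl(3|H|^2+2(\scal_n(TL)-\scal_L)\Bigr).
\]
This is the step that needs care. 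I must match the averaging constant exactly and check that the paper's $\scal_n$ convention fits the factor $2$. A sanity check is the round sphere $S^n(1)\subset\mathbb R^{n+1}$: there $\kappa=1$, $|H|^2=n^2$, and $\scal_L=n(n-1)$, so the right side is $(3n^2-2n(n-1))/(n(n+2))=1$, as it should be.

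\emph{Step 3: combine.} Averaging the identity of Step~2 over $L$ and inserting $\ave(|H|^2)\ge A^2$ from Step~1 gives the displayed inequality. When $A\le 0$, the bound $|H|^2\ge0$ suffices in place of Step~1. Taking the supremum over admissible $f$ yields \eqref{eq:main2}. If the supremum defining $\Theta_n(\Omega)$ is not attained, approximate it by admissible $f$, which is harmless since the inequality is closed.

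The main obstacle is bookkeeping rather than ideas. I have to verify the spherical moment constants $\ave(u_iu_ju_ku_l)$ and confirm that $\scal_n$ is normalized so the Gauss equation appears with coefficient exactly $1$.
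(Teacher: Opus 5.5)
Your proof is correct and follows the paper's argument: you integrate the submanifold Laplacian formula to get $\ave(\lvert H\rvert)\geq A_n(f,\Omega)$, apply Cauchy--Schwarz to pass to $\ave(\lvert H\rvert^2)$, and average Gromov's form of Petrunin's Gauss formula, which you derive as the paper does from the spherical averaging identity and the traced Gauss equation. The only difference is cosmetic: you fix $f$ and treat $A\leq 0$ separately, whereas the paper first takes the supremum to get $\ave(\lvert H\rvert)\geq\Theta_n(\Omega)$ and then squares using $\Theta_n(\Omega)\geq 0$ (constants are admissible). Your final supremum step silently needs that same nonnegativity, so it is worth stating explicitly.
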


\begin{remark}
$\Theta_n(B_r^m) \geq n/r$, with equality in the Euclidean case; see Remark~\ref{rem:thetaeuclidean}.
\end{remark}

Theorem~\ref{thm:main2} has two advantages over Theorem~\ref{thm:main}. First, it applies in situations not covered by Theorem~\ref{thm:main}. For instance, if the ambient domain is compact without boundary, then no smooth function can satisfy the required Hessian bound, since any such function must attain a maximum. Second, even when the hypotheses of Theorem~\ref{thm:main} hold, Theorem~\ref{thm:main2} can give a sharper estimate, because the domain may be more convex in the $n$-trace sense than the Hessian lower bound can detect.

We first illustrate this second phenomenon for geodesic balls in hyperbolic space. In this setting, the invariant $\Theta_n(\Omega)$ coincides with the surface-to-volume ratio of the $n$-ball, as in the Euclidean case; see Remark~\ref{rem:thetahyperbolic}.

\begin{corollary}\label{cor:3}
If $\Omega = B_r^m \subset \mathbb H^{m}_{-c^2}$ and $\ave(\scal_L)\leq -c^2n(n-1)$, then
\begin{equation*}
\ave(\kappa) \geq \frac{3c^2}{n(n+2)} \mleft(\frac{\sinh^{n-1}(cr)}{\int_0^{cr}\sinh^{n-1}(s)\,ds}\mright)^2.
\end{equation*}
\end{corollary}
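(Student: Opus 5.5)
The plan is to apply Theorem~\ref{thm:main2} directly. The curvature term is handled by the hypothesis on $\scal_L$, so the real work is a lower bound for $\Theta_n(B_r^m)$ obtained from an explicit radial test function. In $\mathbb H^m_{-c^2}$ every $n$-plane has sectional curvatures $-c^2$, so $\scal_n(TL)\equiv -c^2n(n-1)$. The hypothesis $\ave(\scal_L)\le -c^2n(n-1)$ therefore gives $\ave(\scal_n(TL)-\scal_L)\ge 0$, and \eqref{eq:main2} reduces to $\ave(\kappa)\ge \frac{3}{n(n+2)}\Theta_n(B_r^m)^2$. It then suffices to show
\begin{equation*}
\Theta_n(B_r^m)\ \ge\ A:=\frac{c\,\sinh^{n-1}(cr)}{\int_0^{cr}\sinh^{n-1}(s)\,ds}.
\end{equation*}

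\textbf{Test function.} Let $\rho=d(p,\cdot)$, where $p$ is the center, and take $f=F(\rho)$. The Hessian of $\rho$ in the space form gives
\begin{equation*}
\nabla^2 f=F''(\rho)\,d\rho^2+c\coth(c\rho)F'(\rho)\,(g-d\rho^2).
\end{equation*}
So $\nabla^2 f$ has eigenvalue $F''$ in the radial direction and $c\coth(c\rho)F'$ with multiplicity $m-1\ge n-1$. Suppose $F''\le c\coth(c\rho)F'$. Then the sum of the $n$ smallest eigenvalues is $F''+(n-1)c\coth(c\rho)F'$. I choose $F$ to make this quantity identically $A$, that is, $(\sinh^{n-1}(c\rho)F')'=A\sinh^{n-1}(c\rho)$. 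This gives
\begin{equation*}
F'(\rho)=A\,\frac{\int_0^\rho\sinh^{n-1}(cs)\,ds}{\sinh^{n-1}(c\rho)},
\end{equation*}
and $A$ is normalized exactly so that $F'(r)=1$. Near $\rho=0$ we have $F'(\rho)=A\rho/n+O(\rho^3)$, with an odd expansion. Hence $F$ is an even smooth function of $\rho$, and $f$ is smooth near $p$ as well as on a neighborhood of $B_r^m$ (extend $F$ slightly beyond $r$).

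\textbf{Verifications.} Two inequalities remain, and they are the main, though elementary, obstacle.

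First, the eigenvalue ordering $F''\le c\coth(c\rho)F'$. Substituting $F''=A-(n-1)c\coth(c\rho)F'$, this becomes $A\le nc\coth(c\rho)F'$, which is equivalent to
\begin{equation*}
\sinh^n(c\rho)\le nc\cosh(c\rho)\int_0^\rho\sinh^{n-1}(cs)\,ds.
\end{equation*}
Both sides vanish at $\rho=0$. The derivative of the left side is $nc\sinh^{n-1}\cosh$, while the derivative of the right side is that same term plus $nc^2\sinh\int_0^\rho\sinh^{n-1}\ge 0$, so the inequality holds.

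Second, the gradient bound $\lvert\nabla f\rvert=\lvert F'\rvert\le 1$ on $[0,r]$. Clearly $F'\ge0$, so it suffices that $F'$ is nondecreasing, since then $F'\le F'(r)=1$. I would verify this by showing that the ratio $\int_0^\rho\sinh^{n-1}(cs)\,ds/\sinh^{n-1}(c\rho)$ is increasing. Its derivative has the sign of
\begin{equation*}
\sinh^{n}(c\rho)-(n-1)c\cosh(c\rho)\int_0^\rho\sinh^{n-1}(cs)\,ds.
\end{equation*}
This can be compared at $0$ and by differentiating once more, using $\int_0^\rho\sinh^{n-1}(cs)\,ds\le \sinh^{n}(c\rho)/(nc\cosh(c\rho))$, which holds by an analogous derivative comparison. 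If this step resists, a fallback is to check directly that $F''\ge0$ from the ODE.

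With both inequalities in hand, $A_n(f,B_r^m)=A$ and $\sup\lvert\nabla f\rvert\le1$. Hence $\Theta_n\ge A$, and the corollary follows.
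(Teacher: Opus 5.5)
Your strategy is the paper's: the same radial test function fixed by $(\sinh^{n-1}(c\rho)F')'=A\sinh^{n-1}(c\rho)$, and the same eigenvalue-ordering inequality $\sinh^n(c\rho)\le nc\cosh(c\rho)\int_0^\rho\sinh^{n-1}(cs)\,ds$; your derivative comparison for that inequality is correct. You also reduce the gradient bound, as the paper does, to monotonicity of $G(\rho)=\int_0^\rho\sinh^{n-1}(cs)\,ds/\sinh^{n-1}(c\rho)$.

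The gap is in exactly the step you flagged. The auxiliary bound you invoke, $\int_0^\rho\sinh^{n-1}(cs)\,ds\le\sinh^n(c\rho)/(nc\cosh(c\rho))$, is false for every $\rho>0$. It is the reverse of the inequality you proved one paragraph earlier, and that one is strict, because the two derivatives differ by $nc^2\sinh(c\rho)\int_0^\rho\sinh^{n-1}(cs)\,ds>0$. So, as written, $G'\ge0$ is not established, and neither is $\lvert\nabla f\rvert\le1$. The fallback does not help on its own. Since $F'=AG$, the condition $F''\ge0$ is the same inequality $(n-1)c\cosh(c\rho)\int_0^\rho\sinh^{n-1}(cs)\,ds\le\sinh^n(c\rho)$ that $G'\ge0$ requires.

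What you need is that bound with $n-1$ in place of $n$. It comes from a different comparison, the one the paper uses. For $n=1$, $G(\rho)=\rho$ and there is nothing to prove. For $n\ge2$ and $0<s\le\rho$, monotonicity of $\coth$ gives $\frac{d}{ds}\sinh^{n-1}(cs)=(n-1)c\coth(cs)\sinh^{n-1}(cs)\ge(n-1)c\coth(c\rho)\sinh^{n-1}(cs)$. Integrating over $[0,\rho]$ yields $\sinh^{n-1}(c\rho)\ge(n-1)c\coth(c\rho)\int_0^\rho\sinh^{n-1}(cs)\,ds$, which after multiplying by $\sinh(c\rho)$ is exactly $G'\ge0$.

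Alternatively, your ODE fallback can be made rigorous by a first-zero argument. We have $F''(0)=A/n>0$. Differentiating $F''+(n-1)c\coth(c\rho)F'=A$ shows that at any zero $\rho_0>0$ of $F''$, $F'''(\rho_0)=(n-1)c^2F'(\rho_0)/\sinh^2(c\rho_0)>0$. This is impossible at a first zero approached from positive values, so $F''>0$ throughout.

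With this repaired, and noting that $\nabla^2 f(p)=(A/n)g$ so the $n$-trace at the center is also $A$, your proof is complete and coincides with the paper's.
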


Clearly, as $c\to0$, the lower bound in Corollary~\ref{cor:3} converges to the sharp Euclidean constant $3n/((n+2)r^2)$. However, for every $c>0$, equality in the hyperbolic lower bound is not attained by any closed immersed submanifold; see Remark~\ref{rem:hyperbolic}.

As another concrete application of Theorem~\ref{thm:main2}, we give a new normal curvature obstruction for closed submanifolds immersed in Euclidean tubes, again expressed in terms of the average of $\kappa$. 

\begin{corollary}\label{cor:4}
Let $U^k \subset \mathbb R^k$, where $1\leq k<n$. If $\Omega = U^k \times B_r^{m-k} \subset\mathbb R^{m}$ and $\ave(\scal_L)\leq 0$, then
\begin{equation*}
\ave(\kappa) \geq \frac{3(n-k)^2}{n(n+2)r^2}.
\end{equation*}
\end{corollary}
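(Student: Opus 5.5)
We will deduce Corollary~\ref{cor:4} from Theorem~\ref{thm:main2}. Since the ambient space is Euclidean, $\scal_n\equiv 0$, and the hypothesis $\ave(\scal_L)\le 0$ makes the second term in \eqref{eq:main2} nonnegative. It therefore suffices to prove
\begin{equation*}
\Theta_n(U^k\times B_r^{m-k}) \geq \frac{n-k}{r},
\end{equation*}
because then \eqref{eq:main2} gives $\ave(\kappa)\ge 3(n-k)^2/(n(n+2)r^2)$.

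To bound $\Theta_n$ from below, we need one explicit test function. Write points of $\mathbb R^m$ as $(x,y)$ with $x\in\mathbb R^k$ and $y\in\mathbb R^{m-k}$. Assume $B_r^{m-k}$ is centered at the origin (translate otherwise), and set
\begin{equation*}
f(x,y)=\frac{\lvert y\rvert^2}{2r}.
\end{equation*}
This $f$ is smooth on all of $\mathbb R^m$, and $\nabla f=(0,y/r)$. Hence $\lvert\nabla f\rvert=\lvert y\rvert/r\le 1$ on $\Omega$, so $f$ is admissible in the supremum defining $\Theta_n(\Omega)$.

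The Hessian is $\nabla^2 f=\frac1r\,\pi_y$, where $\pi_y$ is orthogonal projection onto the $\mathbb R^{m-k}$ factor. Its eigenvalues are $0$ with multiplicity $k$ and $1/r$ with multiplicity $m-k$. Since $k<n\le m$, the sum of the $n$ smallest eigenvalues is $(n-k)/r$. Equivalently, for any $n$-plane $P$ we have $\dim(P\cap(\mathbb R^k\times\{0\}))\le k$, so $\tr_P\pi_y=n-\lvert\pi_x|_P\rvert_{HS}^2\ge n-k$. The last inequality holds because $\lvert\pi_x|_P\rvert_{HS}^2=\tr(\pi_x\pi_P)\le \operatorname{rank}\pi_x=k$. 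This bound holds at every point, so $A_n(f,\Omega)\ge (n-k)/r$ and hence $\Theta_n(\Omega)\ge (n-k)/r$.

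If the paper requires $n\le m$ implicitly (it does, since $L$ is immersed in $M^m$), the eigenvalue count above is valid. The only mildly delicate step is this trace estimate over $n$-planes, and it is a standard Ky Fan–type fact. The argument does not use boundedness or convexity of $U^k$, which matches the generality of the statement.
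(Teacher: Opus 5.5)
Your proof is correct and follows the paper's argument essentially verbatim. You use the same test function $f(x,y)=\lvert y\rvert^2/(2r)$ and the same trace bound $\sum_i\lvert\pi_x e_i\rvert^2=\sum_\alpha\lvert\pi_P b_\alpha\rvert^2\le k$ (your $\tr(\pi_x\pi_P)\le k$) to get $\Theta_n(\Omega)\ge (n-k)/r$, and then apply Theorem~\ref{thm:main2} with $\scal_n\equiv 0$; your eigenvalue remark is just the equivalent truncated-Laplacian description of $A_n$ noted in Section~\ref{sec:theta}.
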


Note that when $U$ is bounded and long in some direction, the estimate above is stronger than the one obtained from Theorem~\ref{thm:main}. Indeed, suppose that $U$ contains a line segment of length $2\ell$. Then any function $f$ satisfying the hypotheses of Theorem~\ref{thm:main} must have $\lambda/\mu \leq 1/\ell$. To see this, restrict $f$ to the segment and parametrize it by arclength. Since $(f\circ\gamma)'' \geq \lambda$ and $\lvert(f\circ\gamma)'\rvert \leq \mu$, the derivative must increase by at least $2\lambda\ell$ across the segment while it remains in $[-\mu,\mu]$, and so $2\lambda\ell \leq 2\mu$. Thus the convexity term in Theorem~\ref{thm:main} is bounded above by $3n/((n+2)\ell^2)$, which is smaller than the corresponding term in Corollary~\ref{cor:4} whenever $\ell >nr/(n-k)$.

Corollary~\ref{cor:4} remains valid if the Euclidean set $U^k$ is replaced by a closed Riemannian manifold, provided the hypothesis $\ave(\scal_L)\leq 0$ is replaced by $\ave(\scal_L)\leq \ave(\scal_n(TL))$. The proof carries over verbatim, since the estimate of the $n$-trace of the Hessian uses only the dimension of the first factor. As Remark~\ref{rem:tube} indicates, equality in the Euclidean tube estimate is not attained; nevertheless, Example~\ref{ex:sharp} shows that the same bound is sharp in this more general setting.

The proof of Theorem~\ref{thm:main2} is given in section~\ref{sec:proofs1}. We briefly outline the argument. For each admissible function $f$ in the definition of $\Theta_n(\Omega)$, the restriction of $f$ to $L$ has integral Laplacian equal to zero, because $L$ is closed. A standard submanifold Laplacian formula expresses $\Delta_Lf\rvert_L$ as the sum of the tangential trace of the ambient Hessian and the inner product $\langle H, \nabla f\rangle$, where $H$ is the mean curvature vector. The first term is bounded below by $A_n(f,\Omega)$, while the normalization $\sup_\Omega\lvert\nabla f\rvert\leq 1$ bounds the second term below by $-\lvert H\rvert$. After integrating over $L$ and optimizing over $f$, one obtains the estimate 
\begin{equation}\label{eq:averageH}
\ave(\lvert H\rvert) \geq \Theta_n(\Omega).
\end{equation}
The desired lower bound for $\ave(\kappa)$ then follows by averaging Petrunin's Gauss formula.

A useful by-product of the argument is the estimate~\eqref{eq:averageH} itself, which can be viewed as a convexity-based obstruction to minimality and low bending. By the triangle inequality, it implies that the average of $\lvert\two(u,u)\rvert$ over unit vectors in $TL$ is at least $\Theta_n(\Omega)/n$. When $n=1$ and $\Omega =\mathbb B^3$, these two estimates recover Fáry's classical theorem on the average curvature of a closed curve in $\mathbb B^3$~\cite{fary1950, tabachnikov2003}; see also \cite{chakerian1962, chakerian1964, veeravalli2001} for refinements and extensions. The Euclidean analogue of \eqref{eq:averageH} follows from \cite[Thm.~28.2.5]{burago1988}.
 
This work should be viewed as part of an emerging program, initiated by Gromov~\cite{gromov2022, gromov2022b, gromov2023, gromov2025}, on immersions with controlled normal curvature. A guiding question is to determine, for a closed smooth manifold, the least possible normal curvature among immersions into a Euclidean ball, and to understand the topological and geometric consequences of attaining the optimal bound. Related results include Petrunin’s optimality theorem for Gromov’s tori~\cite{petrunin2024} and his Veronese rigidity theorem~\cite{petrunin2024b}, as well as subsequent developments by Mendes~\cite{mendes2025} and Chodosh--Li~\cite{chodosh2026}.

\section{A submanifold Laplacian formula}
In this section we recall a basic submanifold formula for the Laplacian of the restriction of an ambient function.

Let $\iota\colon N^n \hookrightarrow (M^{m},g)$ be an immersed submanifold, with the induced metric. We will often simply write $N$ and $M$, omitting the dimensions from the notation. Let $f$ be a smooth function on $M$, and denote its gradient and Hessian by $\nabla f$ and $\nabla^2 f$, respectively. Then, for any pair of vector fields $V,W$ on $N$,
\begin{equation*}
\nabla^2_N f\rvert_N(V,W) = \nabla^2 f(V,W) + \langle \nabla f ,\two(V,W)\rangle,
\end{equation*}
where $f\rvert_N = f\circ\iota$ and, in the second term, we identify $\iota_\ast V$ and $\iota_\ast W$ with $V$ and $W$; see \cite[Prop.~1.2]{dajczer2019}.

Let $(e_1,\dotsc, e_n)$ be a local orthonormal frame on $N$, and let $\Delta_N$ be the Laplace operator on $N$. Recall that 
\begin{equation*}
\Delta_N f\rvert_N = \sum_{i=1}^n \nabla^2_N f\rvert_N(e_i,e_i).
\end{equation*}
Using the preceding identity, we obtain
\begin{equation*}
\Delta_N f\rvert_N = \sum_{i=1}^n\nabla^2 f(e_i,e_i) + \sum_{i=1}^n\langle \nabla f ,\two(e_i,e_i)\rangle.
\end{equation*}
The first term on the right-hand side is the tangential trace of the ambient Hessian, which we denote by $\tr_{TN} (\nabla^2 f)$. Writing $H= \sum_i \two(e_i,e_i)$ for the mean curvature vector, the formula becomes
\begin{equation}\label{eq:tr}
\Delta_N f\rvert_N = \tr_{TN} (\nabla^2 f)+ \langle H,  \nabla f \rangle.
\end{equation}

\section{Petrunin's Gauss formula}

To prove Theorem~\ref{thm:main2}, we will need the Riemannian analogue of Petrunin’s Gauss formula, which we review in this section.

In \cite{petrunin2024}, Petrunin showed that for any submanifold $N^n\subset \mathbb R^{m}$,
\begin{equation*}
\scal_N = \frac{3}{2} \lvert H\rvert^2 - \frac{n(n+2)}{2}\kappa.
\end{equation*}
As observed by Gromov, this formula has a direct analogue in the Riemannian setting. For an $n$-plane $P\subset T_pM$, let $\scal_n(P)$ denote the scalar curvature obtained by tracing the ambient curvature tensor over $P$:
\begin{equation*}
\scal_n(P) = \sum_{i,j=1}^n \langle R(e_i,e_j)e_j,e_i\rangle,
\end{equation*}
where $(e_1,\dotsc, e_n)$ is any orthonormal basis of $P$. Thus, for a submanifold $N^n\subset M^m$, $\scal_n(TN)$ denotes the function $p\mapsto \scal_n(T_pN)$.

\begin{lemma}[\cite{gromov2025}]\label{lm:gromov}
For any immersed submanifold $N^n\subset M^{m}$,
\begin{equation}\label{eq:gromov}
\scal_N = \scal_n(TN) + \frac{3}{2} \lvert H\rvert^2 - \frac{n(n+2)}{2}\kappa.
\end{equation}
\end{lemma}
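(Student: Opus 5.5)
The plan is to obtain \eqref{eq:gromov} from the classical Gauss equation together with an averaging identity for $\lvert\two(u,u)\rvert^2$ over the unit sphere of $T_pN$. Everything is pointwise, so fix $p\in N$ and an orthonormal basis $(e_1,\dotsc,e_n)$ of $T_pN$. Write $\two_{ij}=\two(e_i,e_j)$, so that $H=\sum_i\two_{ii}$.

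First I trace the Gauss equation. For tangent vectors it reads
\begin{equation*}
\langle R_N(e_i,e_j)e_j,e_i\rangle=\langle R(e_i,e_j)e_j,e_i\rangle+\langle\two_{ii},\two_{jj}\rangle-\lvert\two_{ij}\rvert^2 .
\end{equation*}
Summing over $i,j$ gives
\begin{equation*}
\scal_N=\scal_n(T_pN)+\lvert H\rvert^2-\lvert\two\rvert^2,
\qquad\text{where } \lvert\two\rvert^2=\sum_{i,j}\lvert\two_{ij}\rvert^2 .
\end{equation*}

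Second, I compute $\kappa(p)$. For a unit vector $u=\sum_i u_ie_i$ we have $\lvert\two(u,u)\rvert^2=\sum_{i,j,k,l}u_iu_ju_ku_l\langle\two_{ij},\two_{kl}\rangle$. Averaging over $S^{n-1}$ uses the standard fourth-moment identity
\begin{equation*}
\ave_{S^{n-1}}(u_iu_ju_ku_l)=\frac{\delta_{ij}\delta_{kl}+\delta_{ik}\delta_{jl}+\delta_{il}\delta_{jk}}{n(n+2)} .
\end{equation*}
This identity follows, for instance, by integrating $\lvert x\rvert^4$ against the Gaussian measure, or from $\ave(u_1^4)=3/(n(n+2))$ and $\ave(u_1^2u_2^2)=1/(n(n+2))$. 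By the symmetry of $\two$, it yields
\begin{equation*}
\kappa=\frac{\lvert H\rvert^2+2\lvert\two\rvert^2}{n(n+2)} .
\end{equation*}

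Finally, I solve this for $\lvert\two\rvert^2$, obtaining
\begin{equation*}
\lvert\two\rvert^2=\frac{n(n+2)}{2}\,\kappa-\frac{1}{2}\lvert H\rvert^2,
\end{equation*}
and substitute into the traced Gauss equation. This gives $\scal_N=\scal_n(TN)+\tfrac32\lvert H\rvert^2-\tfrac{n(n+2)}{2}\kappa$, which is \eqref{eq:gromov}.

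The only step needing care is the fourth-moment computation, in particular getting the constant $1/(n(n+2))$ right. I would double-check the constant in two ways. One is the Euclidean case, which must reduce to Petrunin's formula. The other is the test case $\two(u,u)=\lvert u\rvert^2\nu$ of a totally umbilic submanifold, where $\kappa=1$, $\lvert H\rvert^2=n^2$ and $\lvert\two\rvert^2=n$; these values satisfy $n(n+2)\kappa=\lvert H\rvert^2+2\lvert\two\rvert^2$, as required.
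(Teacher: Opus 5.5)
Your proposal is correct and follows the same route as the paper: the traced Gauss equation $\scal_N=\scal_n(TN)+\lvert H\rvert^2-\lvert\two\rvert^2$ combined with Petrunin's averaging identity $\kappa=(\lvert H\rvert^2+2\lvert\two\rvert^2)/(n(n+2))$. The only difference is that you derive that identity from the fourth-moment formula on $S^{n-1}$, whereas the paper simply quotes it; your constants and the umbilic sanity check are right.
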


\begin{proof}
Combining Petrunin’s spherical averaging identity
\begin{equation*}
\kappa = \frac{\lvert H\rvert^2 + 2\lvert \two\rvert^2}{n(n+2)}
\end{equation*}
with the traced Gauss equation
\begin{equation*}
\scal_N = \scal_n(TN) + \lvert H\rvert^2 - \lvert \two\rvert^2,
\end{equation*}
we obtain \eqref{eq:gromov}.
\end{proof}

\section{Proof of Theorem~\ref{thm:main2}}\label{sec:proofs1}
We are now ready to prove the two theorems stated in the introduction. We begin with Theorem~\ref{thm:main2}.

\begin{proof}[Proof of Theorem~\textup{\ref{thm:main2}}]
Let $f$ be smooth on an open neighborhood of $\Omega$ in $M$ and satisfy $\sup_\Omega \lvert \nabla f\rvert \leq 1$. Since $L$ is closed, the divergence theorem gives
\begin{equation*}
\int_L \Delta_L f\rvert_L\,dV =0.
\end{equation*}
Thus, integrating \eqref{eq:tr}, we obtain
\begin{equation*}
0 = \int_L \tr_{TL} (\nabla^2 f) \,dV + \int_L \langle H,  \nabla f \rangle\, dV.
\end{equation*}
Since $T_pL$ is an $n$-plane in $T_pM$, we have
\begin{equation}\label{eq:tr2}
\tr_{TL} (\nabla^2 f) \geq A_n(f,\Omega).
\end{equation}
Moreover, by Cauchy--Schwarz,
\begin{equation}\label{eq:Hu}
\langle H, \nabla f\rangle \geq - \lvert\langle H, \nabla f\rangle \rvert \geq - \lvert H\rvert \lvert \nabla f\rvert \geq -\lvert H\rvert.
\end{equation}
Substituting these estimates into the integral identity yields
\begin{equation*}
0 \geq A_n(f,\Omega) \vol(L) -  \int_L\lvert H\rvert\, dV.
\end{equation*}
Equivalently,
\begin{equation*}
\ave(\lvert H\rvert)= \frac{1}{\vol(L)}\int_L\lvert H\rvert\, dV\geq A_n(f,\Omega).
\end{equation*}
This holds for every admissible $f$, so taking the supremum over all $f$ with $\sup_\Omega\lvert\nabla f\rvert \leq 1$ gives
\begin{equation*}
\ave(\lvert H\rvert)\geq \Theta_n(\Omega).
\end{equation*}
Note that $\Theta_n(\Omega)\geq 0$, since constant functions are admissible in the definition of $\Theta_n$ and satisfy $A_n(f,\Omega) =0$. Hence
\begin{equation}\label{eq:Hbound}
\ave(\lvert H\rvert)^2\geq \Theta_n(\Omega)^2.
\end{equation}

Next we average \eqref{eq:gromov}: 
\begin{equation*}
\ave(\scal_L) =  \ave(\scal_n(TL)) + \frac{3}{2} \ave( \lvert H\rvert^2 ) - \frac{n(n+2)}{2}\ave(\kappa).
\end{equation*}
Rearranging,
\begin{equation*}
\ave(\kappa) = \frac{3}{n(n+2)}  \ave( \lvert H\rvert^2 ) + \frac{2}{n(n+2)} (\ave(\scal_n(TL)) - \ave(\scal_L) ).
\end{equation*}
By Cauchy--Schwarz,
\begin{equation*}
\ave(\lvert H\rvert)^2 = \frac{1}{\vol(L)^2} \mleft( \int_L \lvert H\rvert \,dV\mright)^2 \leq \frac{1}{\vol(L)^2} \int_L \lvert H\rvert^2\,dV \int_L 1\,dV = \ave( \lvert H\rvert^2 ).
\end{equation*}
Hence,
\begin{equation*}
\ave(\kappa) \geq \frac{3}{n(n+2)}  \ave(\lvert H\rvert)^2 + \frac{2}{n(n+2)} (\ave(\scal_n(TL)) - \ave(\scal_L) ).
\end{equation*}
Substituting \eqref{eq:Hbound}, we obtain \eqref{eq:main2}, as desired.
\end{proof}

We now derive Theorem~\ref{thm:main} from Theorem~\ref{thm:main2}. Note that it suffices to consider the case $\mu=1$. Indeed, for $\widetilde f = f/\mu$ we have $\lvert\nabla \widetilde f\rvert\leq 1$ and $\nabla^2\widetilde f\geq (\lambda/\mu) g$, so the general case follows from the normalized one.

\begin{proof}[Proof of Theorem~\textup{\ref{thm:main}}]
Suppose there exists a smooth function $f$ on an open neighborhood of $\Omega$ in $M$ with $\nabla^2f\geq \lambda g$ and $\lvert\nabla f\rvert\leq 1$ on $\Omega$. 
For every $q\in \Omega$ and every $n$-plane $P\in \gr_n(T_qM)$, the Hessian lower bound gives
\begin{equation*}
\tr_P (\nabla^2 f) = \sum_{i=1}^n\nabla^2 f(e_i,e_i) \geq \sum_{i=1}^n \lambda g(e_i,e_i)= n\lambda,
\end{equation*}
where $(e_1,\dotsc, e_n)$ is any orthonormal basis  of $P$.
Taking the infimum over all such $P$, we obtain
\begin{equation*}
A_n(f,\Omega) \geq n\lambda.
\end{equation*}
Since $\Theta_n(\Omega) \geq A_n(f,\Omega)$, it follows that
\begin{equation*}
\Theta_n(\Omega) \geq n\lambda.
\end{equation*}
Substituting this bound into Theorem~\ref{thm:main2} completes the proof. 
\end{proof}

\section{Immersions into Cartan--Hadamard balls}

Here we prove Corollaries~\ref{cor:1} and \ref{cor:2}, beginning with the first.

\begin{proof}[Proof of Corollary~\textup{\ref{cor:1}}]
Suppose $(M,g)$ is a Cartan--Hadamard manifold. Fix $p\in M$, and define
\begin{equation*}
\rho(q) = d(p,q),\quad f(q) = \frac{1}{2}\rho(q)^2.
\end{equation*}
Since the exponential map $\exp_p\colon T_p M\to M$ is a global diffeomorphism, the distance function $\rho$ is smooth on $M\setminus \{p\}$. Using
\begin{equation*}
\nabla^2 f = d\rho \otimes d\rho + \rho\,\nabla^2\rho
\end{equation*}
together with the Hessian comparison theorem~\cite[Thm.~6.7]{dajczer2019},
\begin{equation*}
\nabla^2\rho \geq \frac{1}{\rho}(g - d\rho \otimes d\rho),
\end{equation*}
we obtain $\nabla^2f\geq g$ on $M\setminus\{p\}$. In fact, $\nabla^2f =g$ at $p$, so the same inequality holds on all of $M$; see \cite[Ex.~5.9.27]{petersen2016}.
Moreover, since $\lvert \nabla\rho \rvert =1$, we have
\begin{equation*}
\lvert \nabla f \rvert = \rho \lvert\nabla\rho\rvert = \rho.
\end{equation*}
Therefore, on every closed geodesic ball $B_r^m$ centered at $p$, the function $f$ satisfies 
\begin{equation*}
\nabla^2 f \geq \lambda g,\quad \lvert \nabla f \rvert \leq \mu
\end{equation*} 
with $\lambda =1$ and $\mu=r$. Applying Theorem~\ref{thm:main}, the result follows.
\end{proof}

We then turn to the rigidity statement in Corollary~\ref{cor:2}.

\begin{proof}[Proof of Corollary~\textup{\ref{cor:2}}]
Let $\rho = d(p,\cdot)$, where $p$ is the center of $B_r^m$, and choose $f=\rho^2/(2r)$. Note that $f$ is an admissible test function in the definition of $\Theta_n(B_r^m)$, because $\lvert \nabla f\rvert =\rho/r \leq 1$ away from $p$ and $\nabla f(p)=0$. Moreover, since $\nabla^2 f\geq g/r$ by the Hessian comparison theorem, we have
\begin{equation*}
\tr_P(\nabla^2 f) \geq \frac{n}{r}
\end{equation*}
for every $n$-plane $P$.
At the center $p$, one has $\nabla^2 f= g/r$, so the lower bound is attained there. Hence
\begin{equation*}
A_n(f,B_r^m)= \frac{n}{r},\quad \Theta_n(B_r^m) \geq  \frac{n}{r}.
\end{equation*}

Let $D= \ave(\scal_n(TL) - \scal_L)\geq 0$. Recall that by the proof of Theorem~\ref{thm:main2},
\begin{align*}
\ave(\kappa) &= \frac{3}{n(n+2)} \ave(\lvert H\rvert^2) + \frac{2}{n(n+2)} D\\
&\geq \frac{3}{n(n+2)} \ave(\lvert H\rvert)^2 + \frac{2}{n(n+2)} D \\
&\geq \frac{3n}{r^2(n+2)} + \frac{2}{n(n+2)} D  \\
&\geq \frac{3n}{r^2(n+2)}.
\end{align*}
Suppose equality holds in \eqref{eq:cor}. Then equality must hold at every step in this chain of inequalities. Hence
\begin{equation*}
D=0,\quad \ave(\lvert H\rvert) =\frac{n}{r}, \quad \ave(\lvert H\rvert^2)=\ave(\lvert H\rvert)^2.
\end{equation*}
Therefore $\lvert H\rvert$ is constant and $\lvert H\rvert =n/r$.
Subtracting
\begin{equation*}
0 = \frac{n}{r}\vol(L) - \int_L \lvert H\rvert\, dV
\end{equation*}
from the integral identity
\begin{equation*}
0 = \int_L \tr_{TL} (\nabla^2 f) \,dV + \int_L \langle H,  \nabla f \rangle\, dV,
\end{equation*}
we obtain
\begin{equation*}
0 = \int_L \mleft(\tr_{TL} (\nabla^2 f)-\frac{n}{r}\mright) \,dV + \int_L (\langle H,  \nabla f \rangle +\lvert H\rvert )\, dV.
\end{equation*}
By \eqref{eq:tr2} and \eqref{eq:Hu}, both integrands are pointwise nonnegative. Since their sum has integral zero, both must vanish identically. Therefore,
\begin{equation*}
\tr_{TL} (\nabla^2 f)=\frac{n}{r}
\end{equation*}
and
\begin{equation*}
\langle H,  \nabla f \rangle = -\lvert H\rvert. 
\end{equation*}
The second equality gives equality in \eqref{eq:Hu}, namely
\begin{equation*}
\langle H, \nabla f\rangle =- \lvert H\rvert \lvert \nabla f\rvert= - \lvert H\rvert.
\end{equation*}
Since $\lvert H\rvert = n/r >0$, it follows that $\lvert \nabla f\rvert =1$ on $L$. But $\lvert \nabla f\rvert = \rho/r$, and so $\rho =r$ along $L$. Thus, $L\subset \partial B_r^m$. 

We next show that $L$ is minimal in $\partial B_r^m$. Since $f=\rho^2/(2r)$, on $M\setminus\{p\}$ we have
\begin{equation*}
\nabla^2 f = \frac{1}{r} d\rho \otimes d\rho + \frac{1}{r}\rho\,\nabla^2\rho,
\end{equation*}
and we have already shown that $\rho =r$ along $L$. Thus, for every $V\in TL$, 
\begin{equation*}
\nabla^2 f(V,V) = \nabla^2\rho(V,V),
\end{equation*}
which implies
\begin{equation*}
\tr_{TL} (\nabla^2 \rho)= \tr_{TL} (\nabla^2 f)=\frac{n}{r}.
\end{equation*}

Let $\nu = \nabla \rho$ denote the outward unit normal to $\partial B_r^m$, and let $\two^{\partial B_r^m \subset M}$ be the second fundamental form of $\partial B_r^m$. For tangent vector fields $V,W$ on $L$, viewed also as tangent to $\partial B_r^m$, we have
\begin{equation*}
\two^{\partial B_r^m \subset M} (V,W)= \langle \nabla_V W,\nu\rangle\nu = -\nabla^2\rho(V,W)\nu,
\end{equation*}
where the second equality follows by differentiating $\langle W,\nu\rangle =0$ and using $\nu=\nabla\rho$. 
Therefore, tracing over an orthonormal frame $(e_1,\dotsc, e_n)$ for $TL$, we obtain
\begin{equation}\label{eq:nu}
\sum_{i=1}^n\two^{\partial B_r^m \subset M} (e_i,e_i)= -\frac{n}{r}\nu.
\end{equation}
On the other hand, the second fundamental form of $L\subset M$ decomposes as
\begin{equation*}
\two = \two^{L\subset\partial B_r^m} + \two^{\partial B_r^m \subset M}\lvert_{TL},
\end{equation*}
where $\two^{L\subset\partial B_r^m}$ is the second fundamental form of $L$ as a submanifold of $\partial B_r^m$. Taking traces along $L$, this gives
\begin{equation*}
H = H^{L\subset\partial B_r^m} + \sum_{i=1}^n\two^{\partial B_r^m \subset M}(e_i,e_i).
\end{equation*}
Finally, the equality $\langle H,\nabla f\rangle = - \lvert H\rvert \lvert\nabla f\rvert$ implies that $H$ and $\nabla f$ point in opposite directions. Since $\nabla f = (\rho/r)\nabla \rho$ and $\rho =r$ on $L$, we have
\begin{equation*}
H = -\lvert H\rvert \frac{\nabla f}{\lvert \nabla f\rvert} = -\frac{n}{r} \nu.
\end{equation*}
Combining this with \eqref{eq:nu}, we conclude that $H^{L\subset\partial B_r^m}=0$. Thus $L$ is minimal in $\partial B_r^m$, as claimed.

It remains only to check that the radial sectional curvature vanishes along $L$. Since $\nabla^2 f \geq g/r$, and since $\nabla^2 f=\nabla^2\rho$ on vectors tangent to $L$, we have $\nabla^2\rho (e_i,e_i) \geq 1/r$. On the other hand, $\tr_{TL}(\nabla^2\rho) =n/r$, and so $\nabla^2\rho (e_i,e_i) = 1/r$. Hence, choosing the frame so that one of its vectors is parallel to $v$, bilinearity gives
\begin{equation}\label{eq:rigidity}
\nabla^2\rho(v,v) = \frac{1}{r}\lvert v\rvert^2\quad \text{for all }v\in T_qL.
\end{equation}

Let $\gamma\colon[0,r]\to M$ be the radial geodesic from $p$ to $q$, with
\begin{equation*}
\gamma(r) = q,\quad \gamma'(r)=\nabla\rho(q).
\end{equation*}
For $t>0$, let $S_t\colon \gamma'(t)^\perp\to \gamma'(t)^\perp$ be the shape operator of the geodesic sphere of radius $t$, and set
\begin{equation*}
Z_t= S_t - \frac{1}{t}I_t,
\end{equation*}
where $I_t$ is the identity on $\gamma'(t)^\perp$. Since $\langle S_t \cdot, \cdot\rangle = \nabla^2\rho(\cdot,\cdot)$, equation \eqref{eq:rigidity} gives $\langle Z_r v,v\rangle =0$ for all $v\in T_qL$. As $\langle Z_r \cdot, \cdot \rangle \geq 0$ by Hessian comparison, it follows that $Z_r v=0$ for all $v\in T_qL$. 

Now let $V(t)$ be the parallel translate of $v$ along $\gamma$, and set
\begin{equation*}
z(t) = \langle Z_t V(t), V(t)\rangle.
\end{equation*}
Since $z(t)\geq 0$ and $Z_r v=0$, the function $z$ has a minimum at $t=r$, which gives 
\begin{equation*}
z_-'(r)= \lim_{t\to r^-} \frac{z(t)-z(r)}{t-r}\leq 0.
\end{equation*}
On the other hand, the shape operator $S_t$ satisfies the Riccati equation
\begin{equation*}
D_t S_t +S_t^2 +R(\cdot,\gamma'(t))\gamma'(t) =0,
\end{equation*}
where $R$ is the Riemann curvature tensor and $D_t$ denotes the induced connection on $\End(\gamma'(t)^\perp)$; see \cite[Cor.~3.2.10]{petersen2016}. Since $D_tI_t=0$, substituting $S_t = t^{-1} I_t +Z_t$, we get
\begin{equation*}
D_tZ_t +Z_t^2 +\frac{2}{t}Z_t + R(\cdot,\gamma'(t))\gamma'(t)=0.
\end{equation*}
Evaluating at $t=r$ and using $Z_r(v) =0$, we find
\begin{equation*}
z_{-}'(r) = -\langle R(v,\nabla\rho)\nabla\rho, v \rangle \geq 0,
\end{equation*}
because $M$ has nonpositive sectional curvature. Hence $z'_-(r) =0$, and the claim follows.
\end{proof}

\begin{remark}\label{rem:thetaeuclidean}
In the proof above we obtained $\Theta_n(B_r^m) \geq n/r$. We now show that equality holds in the Euclidean case:
\begin{equation*}
\Theta_n(B_r^m \subset \mathbb R^{m}) = \frac{n}{r}.
\end{equation*}

Let $f$ be smooth on a neighborhood $U$ of $B_r^m\subset \mathbb R^{m}$ and satisfy $\lvert\nabla f\rvert \leq 1$ on $B_r^m$. Fix an $n$-dimensional Euclidean plane $P$ through the center $p$, and set 
\begin{equation*}
B_r^n = P\cap B_r^m.
\end{equation*}
Since $P\cap U$ is totally geodesic, hence minimal, equation~\eqref{eq:tr} gives
\begin{equation*}
\Delta_P (f\rvert_{P\cap U}) = \tr_P (\nabla^2 f) 
\end{equation*}
on $P\cap U$. By the divergence theorem on $B_r^n$,
\begin{equation*}
\int_{B_r^n}\tr_P (\nabla^2 f)\, dV = \int_{\partial B_r^n} \langle \nabla^P(f\rvert_{P\cap U}),\nu \rangle\, dS,
\end{equation*}
where $\nu$ is the outward unit normal to $\partial B_r^n$ in $P$. Since $\lvert \nabla^P(f\rvert_{P\cap U})\rvert \leq \lvert\nabla f\rvert\leq 1$, we have
\begin{equation*}
\int_{B_r^n}\tr_P (\nabla^2 f)\, dV \leq \area(\partial B_r^n).
\end{equation*}
Dividing by $\vol(B_r^n)$, and using 
\begin{equation*}
\frac{\area(\partial B_r^n)}{\vol(B_r^n)} = \frac{n}{r},
\end{equation*}
we obtain
\begin{equation*}
\frac{1}{\vol(B_r^n)}\int_{B_r^n}\tr_P (\nabla^2 f)\, dV \leq \frac{n}{r}.
\end{equation*}
Hence there exists a point $q\in B_r^n$ such that 
\begin{equation*}
\tr_P (\nabla^2 f)(q) \leq\frac{n}{r}.
\end{equation*}
Since $A_n(f,B_r^m)$ is the infimum of $\tr_Q (\nabla^2 f)$ over all points $q \in B_r^m$ and all $n$-planes $Q\subset T_q\mathbb R^{m}$, it follows that
\begin{equation*}
A_n(f,B_r^m) \leq\frac{n}{r}.
\end{equation*}
This holds for every admissible $f$, so $\Theta_n(B_r^m\subset \mathbb R^{m}) \leq n/r$, which proves the claim.

We finally note that the same argument works beyond balls. Indeed, let $\Omega\subset\mathbb R^m$, and let $P\subset \mathbb R^m$ be an affine $n$-plane for which $\Omega\cap P$ is a compact domain with smooth boundary in $P$. Then
\begin{equation*}
\Theta_n(\Omega) \leq \frac{\area(\partial (\Omega\cap P) )}{\vol(\Omega\cap P)}.
\end{equation*}
\end{remark}

\section{Immersions into hyperbolic balls}\label{sec:hyperbolic}

In this section we prove Corollary~\ref{cor:3}.

\begin{proof}[Proof of Corollary~\textup{\ref{cor:3}}]
Let $\Omega = B_r^m\subset \mathbb H_{-c^2}^{m}$, and let $\rho = d(p,\cdot)$, where $p$ is the center of the ball. Set
\begin{equation*}
\sigma(t)=\sinh^{n-1}(ct),\quad C = \frac{\sigma(r)}{\int_0^r \sigma(t)\,dt}.
\end{equation*}
We will prove that $\Theta_n(B_r^m) \geq C$. Define $\phi\colon [0,r+\varepsilon] \to\mathbb R$, for some small $\varepsilon>0$, by
\begin{equation*}
\phi'(t) = C\,\frac{\int_0^t \sigma(s)\, ds}{\sigma(t)},\quad \phi(0)=0.
\end{equation*}
Near $t=0$, the expression above extends as an odd analytic function, so $\phi$ is even and analytic near $0$. Since $\rho^2$ is smooth near the center $p$, it follows that $f = \phi\circ\rho$ extends smoothly across $p$.

We next verify the gradient bound. Set
\begin{equation*}
F(t)=\frac{\int_0^t \sigma(s)\,ds}{\sigma(t)}.
\end{equation*}
Since $\lvert \nabla f\rvert =\phi'(\rho)$, $\phi'(t) =CF(t)$, and $\phi'(r)=1$, it is enough to show that $F$ is increasing on $[0,r]$. For $n=1$, this is immediate. For $n\geq 2$, the function $\sigma'(t)/\sigma(t) = (n-1)c\coth(ct)$ is decreasing. Hence, for $0<s\leq t$,
\begin{equation*}
\frac{\sigma'(s)}{\sigma(s)} \geq \frac{\sigma'(t)}{\sigma(t)}.
\end{equation*}
Multiplying by $\sigma(s)$ and integrating from $0$ to $t$, we get 
\begin{equation*}
\sigma(t) = \int_0^t \sigma'(s)\, ds \geq \frac{\sigma'(t)}{\sigma(t)}\int_0^t \sigma(s)\, ds.
\end{equation*}
Therefore $F'(t)\geq 0$, as desired.

It remains to estimate the $n$-trace of $\nabla^2 f$. For $q\neq p$, set $t=\rho(q)$. Since
\begin{equation*}
\nabla^2\rho = c\coth(c\rho)(g- d\rho\otimes d\rho)
\end{equation*}
in hyperbolic space, the Hessian chain rule gives 
\begin{equation*}
\nabla^2f =\phi''(t)\,d\rho\otimes d\rho +\phi'(t)c\coth(ct)(g- d\rho\otimes d\rho).
\end{equation*}
Set $h(t) = c\coth(ct)\phi'(t)$. Differentiating the definition of $\phi'(t)$ yields
\begin{equation*}
\phi''(t) + (n-1)h(t) =C.
\end{equation*}
We next show that $h(t) \geq C/n$. Since, for $0\leq s\leq t$, 
\begin{equation*}
\frac{d}{ds} \sinh^n (cs) = nc \sinh^{n-1}(cs)\cosh (cs) \leq nc \cosh (ct) \sinh^{n-1} (cs),
\end{equation*}
integration gives
\begin{equation}\label{eq:sinh}
\sinh^n (ct) \leq nc \cosh(ct)\int_0^t\sigma(s)\,ds.
\end{equation}
Dividing by $n\sinh^n (ct)$, we obtain
\begin{equation*}
c\coth(ct) \frac{\int_0^t \sigma(s)\, ds}{\sigma(t)} \geq \frac{1}{n}.
\end{equation*}
Thus $h(t) \geq C/n$, and consequently 
\begin{equation*}
\phi''(t) = C- (n-1)h(t) \leq h(t).
\end{equation*}

Now let $P \subset T_q H_{-c^2}^{m}$ be any $n$-plane, and let $(e_1,\dotsc, e_n)$ be an orthonormal basis of $P$. Set
\begin{equation*}
a = \sum_{i=1}^n d\rho(e_i)^2.
\end{equation*}
Since $a$ is the squared length of the projection of $\nabla\rho$ onto $P$, we have $0\leq a\leq 1$. Therefore
\begin{align*}
\tr_P(\nabla^2 f) &= a\phi''(t) + (n-a)h(t) \\
&= C + (1-a)(h(t)-\phi''(t)) \\
&\geq C.
\end{align*}
By continuity, the same inequality holds at $p$. Hence $A_n(f,B_r^m) \geq C$, and so $\Theta_n(B_r^m) \geq C$. Applying Theorem~\ref{thm:main2}, and using the fact that, in $\mathbb H_{-c^2}^{m}$, 
\begin{equation*}
\scal_n(TL) = -c^2n(n-1),
\end{equation*}
the corollary follows.
\end{proof}

\begin{remark}\label{rem:thetahyperbolic}
The constant $C$ is exactly the surface-to-volume ratio of a hyperbolic $n$-ball. The preceding proof gives $\Theta_n(B_r^m) \geq C$. The reverse inequality follows as in Remark~\ref{rem:thetaeuclidean}, replacing the Euclidean $n$-plane through $p$ by a totally geodesic copy of $\mathbb H_{-c^2}^n$. Hence 
\begin{equation*}
\Theta_n(B_r^m\subset \mathbb H_{-c^2}^{m}) =C.
\end{equation*}
\end{remark}

\begin{remark}\label{rem:hyperbolic}
In contrast to the Euclidean case, equality in Corollary~\ref{cor:3} is not attained by any closed submanifold. Indeed, since $\Theta_n(B_r^m) \geq C$ and $\ave(\lvert H\rvert) \geq \Theta_n(B_r^m)$, we have 
\begin{equation*}
\ave(\lvert H\rvert)  \geq C.
\end{equation*}
Suppose that equality holds in Corollary~\ref{cor:3}. Then, under the scalar curvature assumption, the same argument used in the proof of Corollary~\ref{cor:2} yields
\begin{equation*}
\lvert H\rvert = C>0, \quad \tr_{TL} (\nabla^2 f) =C,\quad \langle H,\nabla f\rangle = -\lvert H\rvert,
\end{equation*}
where $f=\phi\circ\rho$. The last equality implies $\lvert \nabla f\rvert =1$ on $L$. However, $\lvert \nabla f\rvert =\phi'(\rho)\leq 1$, with equality only when $\rho=r$, since $F$ is strictly increasing. Hence 
\begin{equation*}
L\subset \partial B_r^m.
\end{equation*}
If $n=m$, this is already impossible for dimensional reasons. Thus assume $n<m$. If $P=T_qL\subset T_q\partial B_r^m$, then $a=0$, so
\begin{equation*}
C=\tr_{TL} (\nabla^2 f) = nc\coth(cr).
\end{equation*}
This is impossible, since inequality~\eqref{eq:sinh} is strict at $t=r$, giving $C<nc\coth(cr)$.
\end{remark}

\section{Immersions into tubes}

Finally, we prove the tube estimate of Corollary~\ref{cor:4}.

\begin{proof}[Proof of Corollary~\textup{\ref{cor:4}}]
For $x\in U^k$ and $y\in\mathbb B^{m-k}_r$, define
\begin{equation*}
f(x,y) = \frac{\lvert y\rvert^2}{2r}.
\end{equation*}
Then $\lvert \nabla f\rvert = \lvert y\rvert/r\leq 1$ and
\begin{equation*}
\nabla^2f = \frac{1}{r} \pi^\ast_y g,
\end{equation*}
where $\pi_y$ denotes projection onto the $\mathbb R^{m-k}$-factor. Let $P \subset T_{(x,y)}\mathbb R^{m}$ be any $n$-plane, and let $(e_1,\dotsc, e_n)$ be an orthonormal basis of $P$. Then
\begin{equation*}
\tr_P(\nabla^2f) = \frac{1}{r} \sum_{i=1}^n \lvert \pi_y e_i \rvert^2.
\end{equation*}
Since $\lvert e_i\rvert^2 = \lvert \pi_x e_i\rvert^2 + \lvert \pi_y e_i\rvert^2 =1$, we get
\begin{equation*}
\sum_{i=1}^n\lvert \pi_y e_i\rvert^2 = n- \sum_{i=1}^n\lvert \pi_x e_i\rvert^2 
\end{equation*}
But the trace of the $x$-projection satisfies
\begin{equation*}
\sum_{i=1}^n\lvert \pi_x e_i\rvert^2\leq k;
\end{equation*}
to see this, let $(b_1,\dotsc, b_k)$ be an orthonormal basis of $T_xU$, and note that
\begin{equation*}
\sum_{i=1}^n\lvert \pi_x e_i\rvert^2 = \sum_{i=1}^n \sum_{\alpha=1}^k \langle e_i,b_\alpha\rangle^2 =\sum_{\alpha=1}^k \sum_{i=1}^n \langle e_i,b_\alpha\rangle^2 = \sum_{\alpha=1}^k \lvert \pi_P b_\alpha\rvert^2.
\end{equation*}
Therefore
\begin{equation*}
\tr_P(\nabla^2 f) \geq \frac{n-k}{r}.
\end{equation*}
Finally, taking the infimum over all $n$-planes $P$, we obtain 
\begin{equation*}
\Theta_n(\Omega)\geq A_n(f,\Omega) \geq\frac{n-k}{r},
\end{equation*}
and the desired inequality follows from Theorem~\ref{thm:main2}.
\end{proof}

\begin{remark}\label{rem:tube}
The estimate in Corollary~\ref{cor:4} is not attained. To see this, suppose equality holds for a closed submanifold $L$, and use the test function from the proof. 
Then equality must hold in the trace estimate
\begin{equation*}
\tr_{T_pL}(\nabla^2 f) \geq \frac{n-k}{r}.
\end{equation*} 
If $(e_1,\dotsc,e_n)$ is an orthonormal basis of $T_pL$, equality means
\begin{equation*}
\sum_{i=1}^n \lvert \pi_x e_i\rvert^2 =k.
\end{equation*}
Since the left-hand side is the trace of the orthogonal projection of $T_pL$ onto the $k$-dimensional horizontal factor, the whole horizontal subspace must be contained in $T_pL$. Thus 
\begin{equation*}
\mathbb R^k \times \{0\} \subset T_pL\quad\forall p\in L.
\end{equation*}
This is impossible for a closed submanifold. Indeed, let $a\in \mathbb R^k\times \{0\}$ be nonzero. Then the function $\varphi(p)=  \langle \iota(p),a\rangle$ has intrinsic gradient equal to $a$ everywhere, because $a$ is tangent to $L$ at every point. Hence $\lvert\nabla^L \varphi\rvert = \lvert a\rvert >0$, contradicting the fact that $\varphi$ must attain a maximum on $L$.
\end{remark}

\begin{example}\label{ex:sharp}
Let 
\begin{equation*}
N^{n-k}\subset \partial B_r^{m-k}\subset \mathbb R^{m-k}
\end{equation*}
be a closed flat submanifold that is minimal in the Euclidean sphere $\partial B_r^{m-k}$. Let $F^k$ be any closed Riemannian manifold, and consider the product immersion
\begin{equation*}
L= F^k \times  N^{n-k} \subset F^k\times \mathbb R^{m-k}.
\end{equation*}
Since $TL = TF\oplus TN$ and the Euclidean factor contributes no ambient curvature, 
\begin{equation*}
\scal_n(TL) = \scal_F.
\end{equation*}
On the other hand, 
\begin{equation*}
\scal_L = \scal_F +\scal_{N} =\scal_F,
\end{equation*}
because $N$ is flat. Hence $\scal_n(TL) = \scal_L$. 
Moreover, since $N$ is minimal in $\partial B_r^{m-k}$, its Euclidean mean curvature vector has constant length $(n-k)/r$. The same is true for the mean curvature vector of $L$, since directions tangent to $F$ contribute nothing to the second fundamental form. Applying \eqref{eq:gromov}, we obtain 
\begin{equation*}
\kappa=\frac{3(n-k)^2}{n(n+2)r^2}.
\end{equation*}
Thus the estimate in Corollary~\ref{cor:4} is sharp for Riemannian product domains. 
\end{example}

\section{The invariant $\Theta_n$}\label{sec:theta}

In this section, we examine the quantity $\Theta_n$ more closely. Recall that, by definition,
\begin{equation*}
\Theta_n(\Omega)= \sup_{\sup_\Omega \lvert \nabla f\rvert \leq 1} \inf_{\substack{q\in\Omega\\P\in \gr_n(T_q M)}}\tr_P (\nabla^2 f),
\end{equation*}
with $f$ smooth on some open neighborhood of $\Omega$ in $M$. Equivalently, if 
\begin{equation*}
\lambda_1^f (q)\leq \dotsb\leq \lambda_m^f (q)
\end{equation*}
are the eigenvalues of $\nabla^2 f(q)$, then
\begin{equation*}
\Theta_n(\Omega)= \sup_{\sup_\Omega \lvert \nabla f\rvert \leq 1} \inf_{q\in\Omega} \sum_{i=1}^n \lambda_i^f(q).
\end{equation*}
In the Euclidean case, the operator $\sum_{i=1}^n \lambda_i^f$ is known as the truncated Laplacian; see, e.g., \cite{birindelli2018, birindelli2021, bessa2022}.

We begin by recording the most basic properties of $\Theta_n$.

\begin{lemma}\label{lm:properties}
The following properties hold.
\begin{enumerate}[font=\upshape, label=(\roman*)]
\item \label{item:nonnegativity} \emph{Nonnegativity}. 
\begin{equation*}
\Theta_n(\Omega) \geq 0.
\end{equation*}
\item \label{item:inclusion}\emph{Monotonicity under inclusion}. If $\Omega_1\subset \Omega_2$, then 
\begin{equation*}
\Theta_n(\Omega_1) \geq \Theta_n(\Omega_2).
\end{equation*}
\item \label{item:isometry}\emph{Invariance under isometries}. If $\Phi \colon (M,g)\to (M',g')$ is an isometry, then 
\begin{equation*}
\Theta'_n(\Phi(\Omega)) = \Theta_n(\Omega).
\end{equation*}
\item \emph{Homothety covariance}. If $\Phi \colon (M,g)\to (M',g')$ is a homothety of factor $c >0$, i.e., 
\begin{equation*}
\Phi^\ast g' = c^2 g,
\end{equation*}
then
\begin{equation*}
\Theta'_n(\Phi(\Omega)) = \frac{1}{c}\Theta_n(\Omega).
\end{equation*}
\item\label{item:nmonotonicity} \emph{Normalized monotonicity in $n$}. If $1\leq n \leq k \leq m$, then
\begin{equation*}
\frac{\Theta_n(\Omega)}{n} \leq \frac{\Theta_k(\Omega)}{k}.
\end{equation*}
\end{enumerate}
\end{lemma}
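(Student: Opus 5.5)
We prove the five properties one at a time, working directly from the definition of $\Theta_n$ as a sup over admissible $f$ of $A_n(f,\Omega)$. Properties (i)–(iv) are formal. Property (v) is the only one that needs a small linear-algebra fact about eigenvalue sums.

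For \ref{item:nonnegativity}, we argue as in the proof of Theorem~\ref{thm:main2}. Constant functions are admissible and have zero Hessian, so $A_n(f,\Omega)=0$ for them, and hence $\Theta_n(\Omega)\geq 0$.

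For \ref{item:inclusion}, suppose $f$ is admissible for $\Omega_2$. Then $f$ is defined on a neighborhood of $\Omega_2$, which is also a neighborhood of $\Omega_1$, and $\sup_{\Omega_1}\lvert\nabla f\rvert\leq\sup_{\Omega_2}\lvert\nabla f\rvert\leq 1$. So $f$ is admissible for $\Omega_1$. The infimum defining $A_n(f,\Omega_1)$ runs over fewer points, so $A_n(f,\Omega_1)\geq A_n(f,\Omega_2)$. Taking the supremum over all $f$ admissible for $\Omega_2$ gives $\Theta_n(\Omega_1)\geq\Theta_n(\Omega_2)$.

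For \ref{item:isometry} and the homothety covariance, we transport functions through $\Phi$. Given $f'$ admissible on a neighborhood of $\Phi(\Omega)$, set $f=f'\circ\Phi$. Under $g=c^{-2}\Phi^\ast g'$, the Levi-Civita connections agree, because a constant rescaling of the metric does not change the connection. Hence $\nabla^2 f=\Phi^\ast(\nabla'^2 f')$ as $(0,2)$-tensors.

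Gradients and orthonormal frames rescale as follows. Gradients satisfy $\lvert\nabla f\rvert_g=c\,\lvert\nabla' f'\rvert_{g'}$. If $(e_i)$ is $g$-orthonormal, then $(\Phi_\ast e_i/c)$ is $g'$-orthonormal. Consequently $\tr_P(\nabla^2 f)=c^2\tr_{\Phi_\ast P}(\nabla'^2 f')$.

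Now normalize by $\tilde f=f/c$. This function satisfies $\lvert\nabla\tilde f\rvert_g\leq 1$ exactly when $\lvert\nabla' f'\rvert\leq 1$, and $A_n(\tilde f,\Omega)=c\,A_n'(f',\Phi(\Omega))$. The correspondence $f'\leftrightarrow\tilde f$ is a bijection between the two admissible classes, so $\Theta_n(\Omega)=c\,\Theta_n'(\Phi(\Omega))$. The case $c=1$ is the isometry statement.

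For \ref{item:nmonotonicity}, we fix $f$ and $q$ and let $\lambda_1\leq\dots\leq\lambda_m$ be the Hessian eigenvalues at $q$. The key claim is that
\[
\frac1n\sum_{i\leq n}\lambda_i\leq\frac1k\sum_{i\leq k}\lambda_i .
\]
This holds because the average of the $n$ smallest eigenvalues is at most the average of the $k$ smallest: each of $\lambda_{n+1},\dots,\lambda_k$ is at least $\lambda_n$, which in turn is at least the mean of $\lambda_1,\dots,\lambda_n$. Combined with the eigenvalue characterization of $A_n$ given at the start of this section, the claim yields $A_n(f,\Omega)/n\leq A_k(f,\Omega)/k$ after taking the infimum over $q$. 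Taking the supremum over admissible $f$, and noting that the admissible class does not depend on $n$, gives $\Theta_n/n\leq\Theta_k/k$.

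The only mildly delicate point in the whole argument is the Hessian scaling under homothety, where one must remember that the connection is unchanged. Otherwise everything is bookkeeping.
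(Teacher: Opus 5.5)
Your proposal is correct and follows the paper's proof essentially step by step. The steps are: constants for (i); restriction of admissible functions for (ii); transport via $f=f'\circ\Phi$ with the rescaling $\tilde f=f/c$ for (iii) and (iv); and the eigenvalue-average inequality followed by taking the infimum over $q$ and the supremum over $f$ for (v). The only cosmetic differences are these. You treat the isometry as the case $c=1$ of the homothety, and you conclude via a bijection of admissible classes rather than by reapplying the argument to $\Phi^{-1}$. You also spell out the invariance of the Levi-Civita connection and the averaging inequality, which the paper leaves implicit.
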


\begin{remark}
Properties \ref{item:nonnegativity} and \ref{item:nmonotonicity} imply monotonicity in $n$:
\begin{equation*}
\Theta_1(\Omega) \leq \dotsb\leq \Theta_m(\Omega).
\end{equation*}
\end{remark}

\begin{proof}[Proof of Lemma~\textup{\ref{lm:properties}}]
\leavevmode
\begin{enumerate}[font=\upshape, label=(\roman*)]
\item Constant functions are admissible, since $\lvert \nabla f\rvert =0 \leq 1$. For such a function, $\nabla^2 f =0$, and therefore $A_n(f,\Omega)=0$. Taking the supremum over all admissible $f$, we get $\Theta_n(\Omega)\geq 0$.

\item Clearly, if $f$ is admissible for $\Omega_2$, then it is also admissible for $\Omega_1\subset\Omega_2$. Moreover, since the infimum over the smaller set is taken over fewer points and planes,
\begin{equation*}
\Theta_n(\Omega_1)\geq A_n(f,\Omega_1)\geq A_n(f,\Omega_2)
\end{equation*}
for every admissible $f$ on $\Omega_2$. Taking the supremum over all such $f$, we obtain $\Theta_n(\Omega_1) \geq \Theta_n(\Omega_2)$. 

\item Let $\Phi\colon (M, g) \to (M', g')$ be an isometry, and let $f'$ be smooth near $\Phi(\Omega)$. Set $f = f'\circ\Phi$. Since $\Phi$ is an isometry,
\begin{equation*}
\lvert \nabla f\rvert_g = \lvert \nabla' f'\rvert_{g'} \circ \Phi,
\end{equation*}
so $f$ is admissible for $\Omega$ if and only if $f'$ is admissible for $\Phi(\Omega)$. Moreover, for $v,w\in T_qM$,
\begin{equation*}
\nabla^2 f(v,w) = \nabla'^2 f'(d\Phi(v), d\Phi(w)).
\end{equation*}
Thus, if $P\subset T_qM$ is an $n$-plane, then so is $d\Phi(P)\subset T_{\Phi(q)} M'$ and
\begin{equation*}
\tr_P (\nabla^2 f) = \tr_{d\Phi(P)} (\nabla'^2 f').
\end{equation*}
Therefore $A_n(f,\Omega) = A_n'(f', \Phi(\Omega))$. Taking the supremum over all admissible $f'$ gives the equality in \ref{item:isometry}.

\item Let $f'$ be smooth near $\Phi(\Omega)$, and set $f= f'\circ\Phi$. Then
\begin{equation*}
\lvert \nabla f\rvert_g = c \lvert \nabla f'\rvert_{g'} \circ\Phi,
\end{equation*}
and so $f'$ is admissible for $\Phi(\Omega)$ if and only if $\widetilde f= f/c$ is admissible for $\Omega$. Since
\begin{equation*}
 \tr_P (\nabla^2 \widetilde f) = c \tr_{d\Phi(P)} \nabla'^2 f',
\end{equation*}
we obtain
\begin{equation*}
A_n(\widetilde f,\Omega) = c A'_n(f',\Phi(\Omega)).
\end{equation*}
Taking suprema over admissible $f'$ gives
\begin{equation*}
\Theta_n(\Omega) \geq c \Theta'_n(\Phi(\Omega)).
\end{equation*}
Applying the same argument to $\Phi^{-1}$, whose homothety factor is $1/c$, gives the reverse inequality. 

\item Fix an admissible function $f$. If $1\leq n \leq k \leq m$, then
\begin{equation*}
\frac{1}{n}\sum_{i=1}^n\lambda_i^f(q)\leq \frac{1}{k}\sum_{i=1}^k \lambda_i^f(q).
\end{equation*}
Taking the infimum over $q\in\Omega$, we get
\begin{equation*}
\frac{1}{n}A_n(f,\Omega) \leq \frac{1}{k}A_k(f,\Omega).
\end{equation*}
Finally, taking the supremum over all admissible $f$ gives the desired property.
\end{enumerate}
\end{proof}

We next discuss two limiting cases that help place $\Theta_n$ in a broader context. First suppose that $n=m$. Then the $n$-trace of the Hessian is simply the Laplacian:
\begin{equation*}
\Theta_m(\Omega) = \sup_{\sup_\Omega\lvert \nabla f\rvert\leq 1} \inf_\Omega \Delta f.
\end{equation*}
Thus $\Theta_m$ is the gradient-field analogue of the Bessa--Montenegro divergence constant $c(\Omega)$~\cite{bessa2003}. More precisely, it is obtained by restricting the vector fields in the definition of $c(\Omega)$ to those of the form $X=\nabla f$. Consequently,
\begin{equation*}
\Theta_m(\Omega) \leq c(\Omega)\leq h(\Omega),
\end{equation*}
where $h(\Omega)$ denotes the Cheeger constant; see \cite{cheeger1970} and \cite[Rem.~2.7]{bessa2003}.

At the opposite end, when $n=1$, we have the following lemma.

\begin{lemma}\label{lm:circumradius}
If $\Omega\subset \mathbb R^m$ is any compact convex set, then
\begin{equation*}
\Theta_1(\Omega) = \frac{1}{R(\Omega)},
\end{equation*}
where $R(\Omega)$ denotes the circumradius.
\end{lemma}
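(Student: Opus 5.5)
Two inequalities are needed, and I would handle them separately.

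For the lower bound $\Theta_1(\Omega)\geq 1/R(\Omega)$, I would take the center $c$ of the smallest enclosing ball, so that $\Omega\subset B_R(c)$ with $R=R(\Omega)$. The test function is $f(x)=\lvert x-c\rvert^2/(2R)$. It satisfies $\lvert\nabla f\rvert=\lvert x-c\rvert/R\leq 1$ on $\Omega$ and $\nabla^2 f = g/R$. Hence every $1$-plane trace equals $1/R$, so $A_1(f,\Omega)=1/R$. Alternatively, monotonicity under inclusion (Lemma~\ref{lm:properties}\ref{item:inclusion}) together with the ball computation in the proof of Corollary~\ref{cor:2} gives the same bound.

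For the upper bound, fix an admissible $f$ and suppose $\nabla^2 f(v,v)\geq a\lvert v\rvert^2$ on $\Omega$ for some $a>0$; this is exactly the condition $A_1(f,\Omega)\geq a$. I want to show $a\leq 1/R$. The map $x\mapsto \nabla f(x)$ then satisfies
\begin{equation*}
\langle \nabla f(x)-\nabla f(y), x-y\rangle \geq a\lvert x-y\rvert^2
\end{equation*}
for $x,y\in\Omega$, by integrating the Hessian along the segment, which lies in $\Omega$ by convexity. In addition $\lvert\nabla f\rvert\leq 1$ on $\Omega$. Following the segment-argument of the introduction, restricting $f$ to any chord of length $\ell$ gives $a\ell\leq 2$, so $a\leq 2/\operatorname{diam}(\Omega)$. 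This alone is too weak, since $R$ can exceed $\operatorname{diam}/2$ (for example, by Jung's theorem).

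The key step is to use instead the point $x_0\in\Omega$ minimizing $f$ on $\Omega$, which exists because $\Omega$ is compact. By strong convexity,
\begin{equation*}
\langle\nabla f(x)-\nabla f(x_0),x-x_0\rangle \geq a\lvert x-x_0\rvert^2,
\end{equation*}
and the first-order optimality condition gives $\langle\nabla f(x_0),x-x_0\rangle\geq 0$ for all $x\in\Omega$. Combining these with Cauchy--Schwarz yields
\begin{equation*}
\lvert x-x_0\rvert \leq \langle\nabla f(x),x-x_0\rangle/(a\lvert x-x_0\rvert)\leq 1/a.
\end{equation*}
Therefore $\Omega\subset B_{1/a}(x_0)$, which forces $R(\Omega)\leq 1/a$, that is, $a\leq 1/R$. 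Taking the supremum over $a$ and then over admissible $f$ gives $\Theta_1(\Omega)\leq 1/R(\Omega)$.

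One degenerate case must be handled first. If $A_1(f,\Omega)\leq 0$, there is nothing to prove. If $\Omega$ is a single point, then $R=0$; here $\Theta_1=+\infty$ (take $f=\lvert x-c\rvert^2/(2\varepsilon)$ and let $\varepsilon\to0$), so the formula holds with the convention $1/0=\infty$.

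The main obstacle is recognizing that the chord argument only controls the diameter and not the circumradius. The fix is to center at the constrained minimizer of $f$ and use the variational inequality there. Interior or boundary location of $x_0$ is irrelevant, which is exactly why convexity of $\Omega$ is needed: both for the segment integration and for the first-order condition.
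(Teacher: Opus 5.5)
Your proof is correct. Your lower bound is the paper's: the paper cites monotonicity under inclusion and the value for the ball, which amounts to your test function $\lvert x-c\rvert^2/(2R)$. Your upper bound takes a genuinely different route.

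The paper puts the origin at the circumcenter $o$. It uses the characterization $o\in\conv(\Omega\cap\partial B^m_R)$ of the minimal enclosing ball, and Carath\'eodory's theorem, to write $o=\sum_i\lambda_i x_i$ with contact points satisfying $\lvert x_i\rvert=R$. It then averages the inequalities $\langle\nabla f(x_i)-\nabla f(o),x_i\rangle\geq aR^2$ so that the $\nabla f(o)$ term cancels. You instead center at the constrained minimizer $x_0$ of $f$ on $\Omega$. There the variational inequality $\langle\nabla f(x_0),x-x_0\rangle\geq 0$ does the work of that cancellation, and you get $\Omega\subset B_{1/a}(x_0)$ directly.

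Your argument is more elementary and self-contained. It needs no facts about minimal enclosing balls and no Carath\'eodory, and $R(\Omega)\leq 1/a$ follows straight from the definition of circumradius. The paper's argument, on the other hand, uses the Hessian bound only along finitely many segments from the circumcenter to contact points. It therefore works unchanged for compact sets that are merely star-shaped with respect to their circumcenter. Yours uses convexity more fully: it needs segments from an $f$-dependent point $x_0$ to every point of $\Omega$, and the first-order condition at $x_0$.
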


\begin{remark}
If $R(\Omega)=0$, then $\Omega$ is a point and $\Theta_1(\Omega) = +\infty$, so the formula holds with the convention $1/0 = +\infty$. We therefore assume $R(\Omega)>0$ in the proof.
\end{remark}

\begin{proof}[Proof of Lemma~\textup{\ref{lm:circumradius}}]
By Lemma~\ref{lm:properties}\ref{item:isometry}, we may apply a rigid motion and assume that the origin $o$ is a circumcenter of $\Omega$. Since $\Omega \subset B^m_{R(\Omega)}$ and $\Theta_1(B^m_{R(\Omega)}) = 1/R(\Omega)$, where $B^m_{R(\Omega)}$ is the ball of radius $R(\Omega)$ centered at $o$, Lemma~\ref{lm:properties}\ref{item:inclusion} gives
\begin{equation*}
\Theta_1(\Omega) \geq \frac{1}{R(\Omega)}.
\end{equation*}

It remains to prove the reverse inequality $\Theta_1(\Omega) \leq 1/R(\Omega)$. Take any admissible $f$, so $\lvert \nabla f\rvert \leq 1$ on $\Omega$, and suppose that 
\begin{equation*}
A_1(f, \Omega) > a>0.
\end{equation*} 
Then
\begin{equation*}
\nabla^2 f (v,v) \geq a \lvert v\rvert^2 \quad \text{for all }q\in \Omega \text{ and } v\in T_q\mathbb R^m.
\end{equation*} 
We will show that this forces $a \leq 1/R(\Omega)$. Once this is established, the desired conclusion follows: since $a$ can be chosen arbitrarily close to $A_1(f,\Omega)$, we have $A_1(f,\Omega)\leq 1/R(\Omega)$; taking the supremum over all admissible $f$, we conclude that $\Theta_1(\Omega) \leq 1/R(\Omega)$.

Let $R= R(\Omega)$. Note that $o \in \conv(\Omega \cap \partial B^m_R)$, because $B^m_R$ is the minimal enclosing ball of $\Omega$. Then, by Carathéodory's theorem, there are $N\leq m+1$ contact points
\begin{equation*}
x_1,\dotsc, x_N \in \Omega \cap \partial B^m_R
\end{equation*}
and weights $\lambda_i \geq 0, \sum_i\lambda_i =1$, such that
\begin{equation*}
o = \sum_i\lambda_i x_i.
\end{equation*}
Moreover, since $\nabla^2 f(\cdot, \cdot) \geq a \lvert\cdot\rvert^2$, the gradient of $f$ increases at least at rate $a$ along every direction. Applying this along the segment from $o$ to $x_i$, we get
\begin{equation*}
\langle \nabla f(x_i) - \nabla f(o), x_i\rangle \geq a \lvert x_i\rvert^2.
\end{equation*}
Since $\lvert x_i\rvert = R$, this becomes
\begin{equation*}
\langle \nabla f(x_i) - \nabla f(o), x_i\rangle \geq a R^2.
\end{equation*}
Multiplying by $\lambda_i$ and summing over $i$ gives
\begin{equation*}
\sum_i\lambda_i\langle \nabla f(x_i), x_i\rangle - \sum_i \lambda_i \langle \nabla f(o), x_i\rangle\geq a R^2.
\end{equation*}
The second term vanishes, because $\sum_i\lambda_i x_i =o$. Therefore
\begin{equation*}
\sum_i\lambda_i\langle \nabla f(x_i), x_i\rangle \geq a R^2.
\end{equation*}
On the other hand, since $\lvert \nabla f\rvert \leq 1$, we have
\begin{equation*}
\langle \nabla f(x_i), x_i\rangle \leq \lvert \nabla f(x_i)\rvert \lvert x_i\rvert \leq R.
\end{equation*}
Taking the weighted average gives
\begin{equation*}
\sum_i\lambda_i \langle \nabla f(x_i), x_i\rangle \leq R.
\end{equation*}
Hence
\begin{equation*}
a R^2\leq \sum_i\lambda_i \langle \nabla f(x_i), x_i\rangle \leq R,
\end{equation*}
and the desired inequality $a\leq 1/R$ follows.
\end{proof}

We conclude this section by computing the value of $\Theta_n$ for geodesic balls in the round sphere. This complements the Euclidean and hyperbolic ball computations already discussed in Remarks~\ref{rem:thetaeuclidean} and \ref{rem:thetahyperbolic}. The spherical case is analogous in spirit, but the optimal radial test function depends on the trace dimension.

\begin{lemma}
Let $B_r^m\subset \mathbb S^m$ be the closed geodesic ball of radius $r$ in the unit round sphere. If $0 < r<\pi$, then
\begin{align*}
\Theta_{n}(B_r^m) = 
\begin{cases}
\max\{ n\cot r, 0\}, \quad &1\leq n<m, \\[1pt]
\dfrac{\area(\partial B_r^m)}{\vol (B_r^m)} = \dfrac{\sin^{m-1} r}{\int_0^r \sin^{m-1} t\,dt},\quad &n=m.
\end{cases}
\end{align*}
If $r\geq \pi$, then $\Theta_{n}(B_r^m) =0$ for all $1 \leq n\leq m$.
\end{lemma}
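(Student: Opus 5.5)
The plan is to prove matching lower and upper bounds in each case. The lower bounds come from explicit radial test functions; the upper bounds come from integrating the Laplacian formula \eqref{eq:tr} over a suitable closed totally geodesic or umbilic $n$-dimensional submanifold inside the ball, in the spirit of Remark~\ref{rem:thetaeuclidean}. Throughout, let $p$ be the center and $\rho=d(p,\cdot)$.

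\emph{The case $r\geq\pi$.} Here $B_r^m=\mathbb S^m$, so every admissible $f$ attains a maximum at some $q$. There $\nabla^2 f(q)\leq 0$, so every $n$-trace at $q$ is $\leq 0$ and $A_n(f,\mathbb S^m)\leq 0$. Together with Lemma~\ref{lm:properties}\ref{item:nonnegativity} this gives $\Theta_n=0$.

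\emph{The case $n<m$, $0<r<\pi$.} For the lower bound with $r<\pi/2$, take $f=-\cos\rho/\sin r$. On the unit sphere $\nabla^2\cos\rho=-\cos\rho\, g$, which is smooth across $p$. Hence $\lvert\nabla f\rvert=\sin\rho/\sin r\leq 1$ on $B_r^m$ and $\nabla^2 f=(\cos\rho/\sin r)g\geq \cot r\, g$, so $A_n(f)\geq n\cot r$. For $r\geq\pi/2$, the bound $\Theta_n\geq 0$ already suffices.

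For the upper bound, set $s=\min\{r,\pi/2\}$ and let $\Sigma$ be a great $n$-subsphere of the geodesic sphere $\partial B_s$. This makes sense because $n\leq m-1$, and $\Sigma\subset B_r^m$ is closed. $\Sigma$ is totally geodesic in $\partial B_s$, and $\partial B_s$ is umbilic in $\mathbb S^m$ with second fundamental form $-\cot s\,\langle\cdot,\cdot\rangle\,\nu$, where $\nu=\nabla\rho$. Hence the mean curvature vector of $\Sigma$ is $H=-n\cot s\,\nu$. Integrating \eqref{eq:tr} over $\Sigma$ gives
\begin{equation*}
\frac{1}{\vol(\Sigma)}\int_\Sigma \tr_{T\Sigma}(\nabla^2 f)\,dV = n\cot s\cdot \frac{1}{\vol(\Sigma)}\int_\Sigma\langle \nabla f,\nu\rangle\,dV\leq n\cot s,
\end{equation*}
using $\cot s\geq 0$ and $\lvert\nabla f\rvert\leq1$. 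Some point of $\Sigma$ therefore has $n$-trace at most $n\cot s=\max\{n\cot r,0\}$, which bounds $A_n(f)$ and hence $\Theta_n$.

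\emph{The case $n=m$.} I mimic the hyperbolic computation of section~\ref{sec:hyperbolic} with $\sigma(t)=\sin^{m-1}t$ and $C=\sigma(r)/\int_0^r\sigma$. Define $\phi'=C\int_0^t\sigma/\sigma$ and $f=\phi\circ\rho$; smoothness across $p$ follows exactly as before. Then $\Delta f=\phi''+(m-1)\cot\rho\,\phi'=C$ identically, so $A_m(f)=C$. Admissibility reduces to showing that $F=\int_0^t\sigma/\sigma$ is increasing on $[0,r]$. This holds because $F'=1-(\sigma'/\sigma)F$, and $\sigma'/\sigma=(m-1)\cot t$ is decreasing on $(0,\pi)$; the integration argument from section~\ref{sec:hyperbolic} then gives $\sigma\geq(\sigma'/\sigma)\int_0^t\sigma$. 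For the upper bound, apply the divergence theorem on $B_r^m$ itself: $\int_{B_r}\Delta f\leq\area(\partial B_r)$, so $\inf\Delta f\leq \area(\partial B_r^m)/\vol(B_r^m)=C$.

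I expect the main obstacle to be the upper bound for $n<m$, specifically choosing the comparison submanifold. A naive choice such as a great $n$-sphere through $p$ leaves the ball, and using $\partial B_r$ when $r>\pi/2$ gives the wrong sign. Switching to the totally geodesic equator $\partial B_{\pi/2}$ when $r\geq\pi/2$ is what makes the bound $\max\{n\cot r,0\}$ come out, so this step needs the most care.
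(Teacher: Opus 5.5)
Your proof is correct. The $r\geq\pi$ case, the lower bounds, and the case $n=m$ follow the paper's route. For $n=m$ you also supply the details the paper omits, and they check out: $\Delta f\equiv C$, $F$ is increasing, and the divergence theorem applies on $B_r^m$.

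The genuine difference is the upper bound for $1\leq n<m$.

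\emph{The paper's route.} It argues pointwise. It takes a maximum point $q$ of $f\rvert_{\partial B_r^m}$ and any $n$-plane $P\subset T_q\partial B_r^m$. There the intrinsic Hessian trace is $\leq 0$, and the umbilic second fundamental form then gives $\tr_P(\nabla^2 f)\leq n\cot r$. For $r\geq\pi/2$ it runs the same argument at $r=\pi/2$ and then invokes monotonicity under inclusion.

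\emph{Your route.} You instead average \eqref{eq:tr} over a closed great $n$-subsphere $\Sigma$ of $\partial B_s$, with $s=\min\{r,\pi/2\}$. This is exactly the inequality \eqref{eq:averageH}, $\ave(\lvert H\rvert)\geq\Theta_n(\Omega)$, from the proof of the main theorem, applied to the test submanifold $\Sigma$, whose mean curvature satisfies $\lvert H\rvert\equiv n\cot s$.

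\emph{Comparison.} Your version has three merits:
\begin{enumerate}
\item It treats all the upper bounds uniformly by the divergence theorem: Remark~\ref{rem:thetaeuclidean}, the case $n=m$, and the case $n<m$.
\item It handles $r\geq\pi/2$ in one stroke via the equatorial great sphere, without appealing to monotonicity.
\item It shows that \eqref{eq:averageH} is attained for these balls.
\end{enumerate}
The paper's maximum-principle argument has a different advantage. It needs only a single $n$-plane tangent to the boundary at one point, not a closed $n$-submanifold of known mean curvature inside $\Omega$. It therefore adapts more readily to domains whose boundary is less symmetric.
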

\begin{proof}
If $r\geq \pi$, then $B_r^m = \mathbb S^m$. For any admissible function $f$, we have $\nabla^2 f\leq 0$ at a maximum point, and hence $A_n(f,\mathbb S^m)\leq 0$. By nonnegativity of $\Theta_n$, this gives $\Theta_n(\mathbb S^m)=0$. Thus we may assume $0<r<\pi$.

Suppose first that $n=m$. The argument is analogous to the hyperbolic case, with $\sin$ in place of $\sinh$. More precisely, one uses the radial test function determined by the spherical volume density $\sin^{m-1}t$, and obtains the lower bound 
\begin{equation*}
\Theta_m(B_r^m)\geq \frac{\area(\partial B_r^m)}{\vol (B_r^m)}.
\end{equation*}
The reverse inequality follows from the divergence theorem applied to an arbitrary admissible function on $B_r^m$. We omit the details.

It remains to consider the case $1\leq n<m$. First let $0< r<\pi/2$, and define
\begin{equation*}
f = -\frac{\cos\rho}{\sin r}.
\end{equation*}
Since $\nabla^2 (\cos\rho) = -\cos(\rho) g$ on $\mathbb S^m$, we have
\begin{equation*}
\nabla^2 f = \frac{\cos\rho}{\sin r} g.
\end{equation*}
Also,
\begin{equation*}
\lvert \nabla f\rvert = \frac{\sin\rho}{\sin r} \leq 1
\end{equation*}
on $B_r^m$. Hence $f$ is admissible, and for every $n$-plane $P$,
\begin{equation*}
\tr_P(\nabla^2 f) = n\frac{\cos\rho}{\sin r}\geq n\frac{\cos r}{\sin r} = n\cot r.
\end{equation*}
Therefore
\begin{equation*}
\Theta_n(B_r^m)\geq n\cot r.
\end{equation*}

For the reverse inequality, let $f$ be admissible. Since $n<m$, we may choose an $n$-plane $P\subset T_q\partial B_r^m$ at a point $q\in \partial B_r^m$ where $f\rvert_{\partial B_r^m}$ attains its maximum. At $q$,
\begin{equation*}
\tr_P \bigl( \nabla^2_{\partial B_r^m} f\rvert_{\partial B_r^m} \bigr)\leq 0.
\end{equation*}
Using the submanifold Hessian formula and
\begin{equation*}
\two_{}(v,w) = -\cot r\langle  v,w\rangle \nabla\rho,
\end{equation*}
we get
\begin{equation*}
\tr_P(\nabla^2 f) = \tr_P \bigl( \nabla^2_{\partial B_r^m} f\rvert_{\partial B_r^m} \bigr) + n \cot r \langle \nabla f, \nabla\rho\rangle\leq n \cot r.
\end{equation*}
Thus $A_n(f, B_r^m) \leq n\cot r$, and taking the supremum over admissible $f$ gives $\Theta_n(B_r^m)\leq n\cot r$. Hence, for $0<r<\pi/2$,
\begin{equation*}
\Theta_n(B_r^m)= n\cot r.
\end{equation*}

Finally, let $\pi/2 \leq r<\pi$. If $r= \pi/2$, then the same boundary argument gives $\Theta_n(B_{\pi/2}^m)\leq 0$, and hence $\Theta_n(B_{\pi/2}^m)= 0$ by nonnegativity. Since $B_{\pi/2}^m \subset B_r^m$, monotonicity under inclusion and nonnegativity imply $\Theta_n(B_r^m) = 0$. This completes the proof. 
\end{proof}

\section*{Acknowledgment}
The author thanks Ricardo Mendes for many helpful comments, and in particular for suggesting Lemma~\ref{lm:circumradius}.

\bibliographystyle{amsplain}
\bibliography{references}
\end{document}